\definecolor{darkred}{rgb}{0.5,0,0}
\definecolor{darkgreen}{rgb}{0,0.5,0}
\definecolor{darkblue}{rgb}{0,0,0.5}
\newtheorem{theorem}{Theorem}%[section]
\newtheorem{assumption}[theorem]{Assumption}%[subsection]
\newtheorem{proposition}[theorem]{Proposition}
\newtheorem{lemma}[theorem]{Lemma}
\theoremstyle{definition}
\newtheorem{definition}[theorem]{Definition}
\theoremstyle{remark}
\newtheorem{example}[theorem]{Example}
\newtheorem*{claim}{Claim}
\newtheorem*{observation}{Observation}
\renewcommand\em{\it}
 \newcommand{\J}{\mathcal{J}}
\newcommand{\R}{\mathbb{R}}
\newcommand{\C}{\mathbb{C}}
\newcommand{\Z}{\mathbb{Z}}
\newcommand{\X}{X}
\newcommand{\on}{\operatorname}
 \newcommand\bs{\backslash}
\renewcommand{\d}{{\on{d}}}
\newcommand{\ol}{\overline}
\newcommand\phinv{\phi^{-1}}
\newcommand\lam{\lambda}
\newcommand\sig{\sigma}
\newcommand\eps{\epsilon}
\newcommand\Om{\Omega}
\newcommand\om{\omega}
\newcommand{\hh}{{\frac{1}{2}}}
\newcommand{\thh}{{\tfrac{1}{2}}}
\newcommand{\ddr}{{\tfrac{\d}{\d r}}}
\newcommand{\dds}{{\tfrac{\d}{\d s}}}
\newcommand{\ppr}{{\frac{\partial}{\partial r}}}
\newcommand{\pptau}{{\frac{\partial}{\partial \tau}}}
\newcommand\cT{\mathcal{T}}
\newcommand\Vect{\on{Vect}}
\newcommand\Id{\on{Id}}
\newcommand{\cyl}{{\on{cyl}}}
\newcommand{\Hof}{{\on{Hof}}}
\newcommand{\hor}{{\on{hor}}}
\newcommand\cpt{{\on{cpt}}}
\newcommand\symp{{\on{cpt}}}
\newcommand\seps{\epsilon_{\on{cpt}}}
\newcommand\tw{{\on{tw}}}
\newcommand\pre{{\on{pre}}}
\newcommand\monot{{\on{monot}}}
\newcommand\ns{{\on{ns}}}
\newcommand\st{{\on{st}}}
\begin{document}
\title{Removal of singularities in Morse-Bott symplectizations}
\author{Manav Gaddam and Sushmita Venugopalan}
\address{The Institute of Mathematical
  Sciences, Chennai, India and 
    Homi Bhabha National Institute, Mumbai, India}
\email{manavg@imsc.res.in, sushmita@imsc.res.in}

% \author{Sushmita Venugopalan}
% \address{The Institute of Mathematical
%   Sciences, Chennai, India}
% \address{Homi Bhabha National Institute, Mumbai, India}
% \email{sushmita@imsc.res.in}

\begin{abstract}
  Hofer-Wysocki-Zehnder \cite{hwz:deg} and Bourgeois
  \cite{bourg:thesis} proved that a finite energy punctured
  pseudoholomorphic curve in the symplectization of a Morse-Bott contact
  manifold either has a removable singularity or asymptotes to a Reeb
  orbit. We give an alternate proof of this result.
\end{abstract}

\maketitle

On compact symplectic manifolds, singularities on finite energy
pseudoholomorphic maps are removable.  We consider symplectizations,
which are cylindrical symplectic manifolds, each of which is a product
of $\R$ and a Morse-Bott contact manifold.  A contact manifold is {\em
  Morse-Bott} if periodic orbits of the same period form a closed
submanifold, and the contact form is non-degenerate in the direction
normal to the submanifold. In such symplectizations, a singularity in
a finite energy pseudoholomorphic map may either be removable, or the
puncture may asymptote to a Reeb orbit.  In the special case where all
Reeb orbits of the contact form are non-degenerate (and consequently
isolated) the result was proved by Hofer-Wysocki-Zehnder
\cite{hwz:nondeg}. The proof for the general Morse-Bott case was
carried out by Hofer-Wysocki-Zehnder \cite{hwz:deg} and Bourgeois
\cite{bourg:thesis}, and is stated below. In this paper, we give an
alternate simpler proof of this result. We include proofs of adaptations
of standard results in an attempt to make the presentation as complete
as possible.

\begin{theorem}\label{thm:main}
  Let $(M,\alpha \in \Om^1(M))$ be a compact contact manifold
  whose Reeb orbits form Morse-Bott families, and let $J$ be a
  cylindrical almost complex structure on $W:=\R \times M$ satisfying a
  taming condition from Definition \ref{def:acs}. Let
  \[u:\R_{\geq 0} \times S^1 \to (W, J)\]
  be a $J$-holomorphic map with finite Hofer energy. Then, $u$ either 
  has a removable singularity at $\infty$ or there exist constants
  $c$, $\delta>0$, $a \in \R$, a period $T \in \R$ and a periodic Reeb
  orbit $\gamma : \R/T\Z \to M$ such that
  \begin{equation}
    \label{eq:decay2cyl}
    d_\cyl(u(s,t),\ol \gamma(s,t)) \leq c e^{- \delta s},  
  \end{equation}
  where
  \[\ol \gamma : \R \times \R/\Z \to W, \quad (s,t) \mapsto (a+ Ts,
    \gamma(Tt)) \]
  is a Reeb cylinder that lifts $\gamma$, and $d_\cyl$ is a product
  metric on $W=\R \times M$.
\end{theorem}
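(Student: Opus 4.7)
The strategy is a Hofer-type three-step scheme adapted to the Morse-Bott setting: (i) uniform higher-order bounds on $u$ over long cylinders, (ii) identification of a Reeb cylinder as a subsequential limit via an asymptotic period, and (iii) an exponential-decay argument driven by the spectral gap of the asymptotic operator. The Morse-Bott hypothesis enters essentially only in step (iii).

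For step (i), finiteness of the Hofer energy gives $\int_{[s,\infty) \times S^1} u^* \d\alpha \to 0$ as $s \to \infty$. A standard rescaling/bubbling argument, using that a cylindrical $J$ admits no non-constant $J$-holomorphic spheres in $W$, rules out gradient blow-up and, after elliptic bootstrapping, yields uniform $C^k$ bounds on each cylinder $[s, s+1] \times S^1$ for $s$ large. For step (ii), I define the \emph{asymptotic period}
\[T := \lim_{s \to \infty} \int_{\{s\} \times S^1} u^* \alpha,\]
whose existence and finiteness follow from Stokes' theorem and the Hofer energy bound. If $T = 0$, a monotonicity/isoperimetric estimate forces $|du|(s, \cdot) \to 0$ and extends $u$ continuously across the puncture, giving a removable singularity. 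If $T \neq 0$, the $C^k$ bounds allow extraction of a sequence $s_n \to \infty$ along which the shifted maps $u(s_n + \cdot, \cdot) - (Ts_n, 0)$ converge in $C^\infty_{\on{loc}}$ to a $J$-holomorphic cylinder of period $T$ and zero $\d\alpha$-energy, which must be a Reeb cylinder $\ol\gamma(s,t) = (a + Ts, \gamma(Tt))$ for some periodic Reeb orbit $\gamma$ in the relevant Morse-Bott family.

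The crux is step (iii). In a tubular neighbourhood of the Morse-Bott submanifold of $T$-periodic orbits I write $u = \ol\gamma + \xi$ in normal coordinates; the $J$-holomorphic equation becomes
\[\partial_s \xi + A_s \xi = N(s, \xi),\]
where $A_s$ is a smooth family of self-adjoint first-order operators on $S^1$-sections converging as $s \to \infty$ to the asymptotic operator $A_\infty$ at $\ol\gamma$, and $N$ is quadratic in $\xi$. The Morse-Bott condition is precisely that $\ker A_\infty$ coincides with the tangent space to the Morse-Bott family at $\gamma$ and that $\sig(A_\infty) \setminus \{0\}$ is bounded away from $0$ by some $\delta_0 > 0$. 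Decomposing $\xi = \xi^0 + \xi^\perp$ with $\xi^0 \in \ker A_\infty$, a second-order differential inequality of the form $f'' \geq \delta_0^2 f - C f^{3/2}$ for $f(s) := \|\xi^\perp(s)\|_{L^2}^2$ gives exponential decay of $\xi^\perp$ at any rate less than $\delta_0$; projecting the evolution equation onto $\ker A_\infty$ yields $|\dot\xi^0| \leq C(\|\xi^0\|^2 + \|\xi^\perp\|^2)$, from which $\xi^0$ converges (thereby selecting the specific $\gamma$ and the constant $a$) at an exponential rate as well. Elliptic bootstrapping on unit cylinders finally promotes $L^2$ decay to the pointwise bound \eqref{eq:decay2cyl}.

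The hardest step will be the control of $\xi^0$: in the non-degenerate case $\ker A_\infty = 0$ and a single coercive inequality suffices, but here one must simultaneously identify the limiting Reeb orbit within the Morse-Bott family (as the limit of $\xi^0$) and show that drift along this family is itself exponentially small. This uses both the quadratic structure of the projected nonlinearity and the a priori $L^2$ decay of $\xi^\perp$ supplied by the spectral gap, and requires the adapted coordinate and almost-complex structure choices guaranteed by Definition \ref{def:acs}.
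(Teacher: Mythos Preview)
Your outline is essentially the original Hofer--Wysocki--Zehnder/Bourgeois argument, and as a sketch it is sound; but it is precisely the proof this paper is offering an \emph{alternative} to. The paper's route avoids the asymptotic operator $A_\infty$, the spectral splitting $\xi = \xi^0 + \xi^\perp$, and the coupled differential inequalities entirely. Instead, after obtaining the preliminary limit (your step~(ii)), the paper \emph{twists} $u$ by the time-$T$ Reeb flow to get a strip $u_\tw(s,t) = (u_\R - Ts, \psi_{-Tt}u_M)$ that no longer closes up, and then \emph{doubles} it to a pseudoholomorphic strip $v:\R_{\geq 0}\times[0,\tfrac12]\to W^2$ with Lagrangian boundary conditions on the diagonal $\Delta$ and the Reeb-shifted diagonal $\Delta_T$. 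The Morse--Bott hypothesis becomes the statement that $\Delta$ and $\Delta_T$ intersect \emph{cleanly}; the problem is thereby converted into removal of singularity for a strip between cleanly intersecting Lagrangians. A strip-monotonicity lemma bounds the image of $v$, and an isoperimetric inequality in an adapted Darboux chart (Pozniak's normal form) yields the exponential decay directly, with no operator-theoretic input.

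The trade-offs: your approach gives a decay rate tied to the spectral gap $\delta_0$ and makes the role of $\ker A_\infty$ (drift along the family) explicit, but the coupled control of $\xi^0$ and $\xi^\perp$ that you flag as hardest is genuinely delicate and is exactly what the paper bypasses. The paper's twist-and-double scheme is more geometric and reduces everything to standard compact-target techniques (monotonicity, isoperimetric, mean-value), at the cost of a less explicit decay constant and the need for a monotonicity theorem for strips near a clean Lagrangian intersection, which the paper proves separately.
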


\noindent All the terms occurring in Theorem \ref{thm:main} are defined in Section \ref{sec:bg}. The above result does not hold if the Morse-Bott assumption is dropped, as shown by Siefring \cite{siefring:deg}.

We prove  Theorem \ref{thm:main} 
by transforming the map $u$ to a map with a removable singularity. To explain the main idea, 
consider first the case when $M$ is fibered by Reeb orbits of length $1$. Then, 
$W$ is a $\C^\times$ bundle. Suppose the map $u$ in Theorem \ref{thm:main} converges to an $N$-cover of a Reeb orbit as $s \to \infty$. 
Then the {\em twisted} map defined as $u_\tw(s,t):=e^{-N(s+it)}u(s,t)$ has a removable singularity at the infinite end. Exponential convergence of $u$ to the Reeb cylinder follows from exponential convergence of $u_\tw$ to $u_\tw(\infty)$. Thus twisting the map $u$ by the right amount transforms the problem into one of removable singularity. The proof of Theorem \ref{thm:main} in this special case is carried out in Section \ref{subsec:sympcase}.

Now consider the general case, where Reeb orbits occur in Morse-Bott families, and suppose the map $u=(u_\R,u_M)$ converges to a Reeb orbit of period $T$ as $s \to \infty$. Then, the
analogue of the twisting step produces a map
\[u_\tw(s,t):=(u_\R - sT, \psi_{-tT}  \circ u_M): \R_{\geq 0} \times [0,1] \to W,\]
where for any $\tau \in \R$, $\psi_\tau : M \to M$ is the Reeb flow by time $\tau$. 
The pseudoholomorphic map $u_\tw$ on a strip 
converges to a constant as $s \to \infty$, but it 
does not close up in the sense that $u_\tw(s,0) \neq u_\tw(s,1)$. To get a map with good boundary behaviour, we {\em double} the strip and define
\[v(s,t):=(u(s,t), u(s,1-t)): \R_{\geq 0} \times [0,\thh] \to W^2.\]
The doubled strip boundary maps to a pair of Lagrangians, namely the
diagonal $\Delta$ in $W^2$, and a shifted diagonal $\Delta_T$
consisting of points $((a,m), (a,\psi_T(m))$. The Lagrangians intersect cleanly because
the family of Reeb orbits is Morse-Bott.  The doubled strip $v$
converges to a point in the Lagrangian intersection. The problem is
thus transformed, via twisting and doubling, to one on removal of
singularity for strips in a compact manifold. The proof of Theorem
\ref{thm:main} in the general case is carried out in Section \ref{subsec:contcase}.

\section{Background}\label{sec:bg}

We define the various terms that occur in the statement of Theorem
\ref{thm:main}.  The proof of the theorem is carried out in the
setting of {\em stable Hamiltonian structures} which generalize
contact manifolds, and are a natural setting for proving compactness
results in symplectic field theory. See \cite{bo:com} and \cite[Section 6.1]{wendl:sft}.

\begin{definition}
  \begin{enumerate}
  \item {\rm(Stable Hamiltonian structure)} A {\em stable Hamiltonian
      structure} is a triple $(M,\alpha,\Om)$ consisting of an
    odd-dimensional manifold $M$, a non-vanishing one-form
    $\alpha \in \Om^1(M)$ and a closed two-form $\Om \in \Om^2(M)$ that
    is non-degenerate on the tangent sub-bundle
    $\ker(\alpha) \subset TM$ and satisfies $\ker \Om \subseteq \ker (d\alpha)$.
  \item {\rm(Reeb vector field)} Given  a stable
    Hamiltonian structure  $(M,\alpha,\om)$, the {\em Reeb vector field} $R \in \Vect(M)$ on
    $M$ is defined by the conditions 
    \[R_\alpha \in \ker \Om, \quad \alpha(R_\alpha)=1. \]
    For any $t \in \R$ we denote by $\psi_t:M \to M$ the time $t$ flow
    by the Reeb vector field.
  \end{enumerate}
\end{definition}

\begin{example}
  \begin{enumerate}
  \item {\rm(Contact case)} A contact manifold $(M,\alpha)$ is a stable Hamiltonian structure with $\Om:=d\alpha$.
  \item {\rm(Symplectic case)} A special case of a stable Hamiltonian
    structure is when the Reeb vector field generates a free
    $S^1$-action on $M$, and consequently $\Om$ is a pullback of a 
    symplectic form on the quotient $M/S^1$.
  \end{enumerate}
  Motivated by these examples, in a stable Hamiltonian structure
  $(M,\alpha,\Om)$, we refer to the one-form $\alpha$ as the {\em contact
    form} and $\Om$ as the {\em horizontal symplectic form}.
\end{example}

\begin{definition}
  {\rm(Symplectization)}
  The {\em symplectization} of a
  stable Hamiltonian structure $(M,\alpha,\Om)$ is the product
  \[W:=\R \times M.\]
  %
  % equipped with a two-form
  % %
  % \[\om + d(\pi_\R \alpha)\]
  % %
  % which is a symplectic form in a neighborhood of $\{0\} \times M$. 
    {\em Translation} by $\tau \in \R$ on $W$ is denoted by
    \[e^\tau : W \to W, \quad (a,m) \mapsto (a+\tau,m).\]

\end{definition}
\noindent The term `symplectization' comes from the fact that in the contact case, $W=\R \times M$ has a symplectic form given by  $d(e^t\alpha)$, but there is no globally defined symplectic form for a general stable Hamiltonian structure. We will not use this form for defining the area of pseudoholomorphic curves.

Asymptotic decay results require us to restrict our attention to contact
forms where Reeb orbits are either isolated (non-degenerate case), or
occur in a family with a non-degeneracy condition in the normal
direction (Morse-Bott case).
\begin{definition}
  \begin{enumerate}
  \item {\rm(Non-degeneracy)} For any $T>0$, a $T$-periodic orbit
    $\gamma : \R/T\Z \to M$ of the Reeb vector field is {\em
      non-degenerate} if the derivative of the return map does not
    have $1$ as an eigenvalue in the transversal of the Reeb
    orbit, that is,
    \[d\psi_T(\gamma(0)) - \Id : T_{\gamma(0)}M/T\gamma \to
      T_{\gamma(0)}M/T\gamma\]
    is non-singular. The contact form $\alpha$
    is non-degenerate if all its Reeb orbits are non-degenerate.
  \item {\rm(Morse-Bott)} A contact form $\alpha$ is {\em Morse-Bott}
    if for any period $T>0$,
    \[N_T:=\{x \in M : \psi_T(x)=x, \psi_t(x) \neq x\, \forall t \in (0,T)\}\]
    is a submanifold of $M$ and $T_x N_T=\ker(d\psi_T(x) - \Id)$. 
  \end{enumerate}
\end{definition}

\begin{definition}\label{def:acs}
  {\rm(Almost complex structures)}
  Let $(M,\alpha,\Om)$ be a stable Hamiltonian structure. Let $W:=\R \times M$.
  \begin{enumerate}
  \item {\rm(Cylindrical)} An almost complex structure $J$ on $W$ is cylindrical if
    \begin{itemize}
    \item it is invariant under $\R$-translations,
    \item $J(\partial_r)=R_\alpha$ where $r$ is the $\R$-coordinate, and $R_\alpha$ is the Reeb vector field,
    \item and the sub-bundle $\ker(\alpha) \subset TM \subset TW$ is $J$-invariant. 
    \end{itemize}
    The conditions imply that $J$ descends to an almost complex structure on the distribution $\ker(\alpha) \subset TM$, which we denote by $J|\ker(\alpha)$.
  \item {\rm(Horizontal tamedness)}  A cylindrical almost complex structure $J$ is {\em horizontally tamed} if the restriction $J|\ker(\alpha)$ is tamed by the two-form $\Om + rd\alpha$ for all $r \in [-\seps,\seps]$. 
  \end{enumerate}
\end{definition}

To define the notion of Hofer energy of curves in a
symplectization, we consider a family of embeddings of the
symplectization into a compact symplectic manifold, and take the
supremum of the symplectic area of the curve over all such
embeddings. (This quantity is called ``energy'' in \cite{Hofer:weinstein}, \cite{bo:com}, \cite[Section 9.2]{wendl:sft}.) 
Finiteness of Hofer energy is a necessary condition for
punctured pseudoholomorphic maps to have good asymptotic behavior. 
\begin{definition}
  {\rm(Hofer Energy)}  Let $(M,\alpha,\Om)$ be a stable Hamiltonian structure. 
   \begin{enumerate}
  \item {\rm(Embeddings into a compact symplectic manifold)}
  Let $\seps>0$ be a small constant so that
 \begin{equation}
      \label{eq:wsymp}
      (W_\cpt,\om_\cpt):=([-\seps,\seps] \times M, \Om + d(\pi_\R\alpha)), 
    \end{equation}
     is a symplectic manifold. The constant $\seps$ is assumed to be a fixed
    constant for a given $M$.  
Let 
  \[ \cT:=\{\varphi:\R \to (-\seps,\seps) : \varphi \text{ is an increasing diffeomorphism}\}.\]    
  For any $\varphi \in \cT$, define an embedding
  \[(\varphi \times \Id_M) : W \to W_\cpt.\]

\item {\rm(Hofer energy)} 
    Let $C$ be a Riemann surface. 
  The {\em Hofer energy} of a map
  $u:C \to \R \times M$ is
  \[E_\Hof(u):= \sup_{\varphi \in \cT}\int_C u^*\om_\varphi,\]
  where 
  \(\om_\varphi:=(\varphi \times \Id_M)^*\om_\symp=\Om + d(\varphi \alpha). \)
  The {\em horizontal energy} of $u$ is defined as
  \[E_\hor(u):=\int_C u^*\Om.\] 
\end{enumerate}
\end{definition}

A horizontally tamed cylindrical almost complex structure $J$ on
$\R \times M$ is $\om_\varphi$-tamed for any increasing diffeomorphism
$\varphi \in \cT$. Therefore, for a non-constant $J$-holomorphic curve
$u$, the $\om_\varphi$-area is positive, and 
$E_\Hof(u) \geq 0$.
For such a $J$-holomorphic curve $u$, the horizontal energy is bounded by Hofer energy
\begin{equation}
  \label{eq:horhof}
  E_\hor(u) \leq E_\Hof(u).
\end{equation}
This is a consequence of \cite[Lemma 9.11]{wendl:sft}.

\begin{assumption}\label{ass:metric}
  {\rm(Metric)} We assume that both $W=\R \times M$ and $W_\symp=[-\seps,\seps] \times M$ are  equipped with a product metric $g_\cyl=dt^2 \oplus g_M$, and $d_\cyl$ is the distance function with respect to $g_\cyl$.  
\end{assumption}

\section{Proof of the theorem}
We prove Theorem \ref{thm:main} in this section. 
Throughout the section, $M$ has a stable Hamiltonian structure, $J$ is a
horizontally tamed cylindrical almost complex structure on
$W:=\R \times M$ and
\[u=(u_\R,u_M) : \R_{\geq 0} \times S^1 \to \R \times M = W\]
is a $J$-holomorphic map.
\subsection{A preliminary limit}
For a punctured pseudoholomorphic map with finite Hofer energy, the punctured end approaches a 
`preliminary limit'
in the following sense.
\begin{proposition}\label{prop:sk}
  Suppose $E_\Hof(u)<\infty$. Then,
  \begin{enumerate}
  \item {\rm(\cite[Lemma 9.3]{wendl:sft})} $|du|_{L^\infty}<\infty$,
  \item {\rm(\cite[p165]{wendl:sft})}
    \label{part:sk2}
    and there are sequences
    $\tau_k \in \R$, $s_k \to \infty$ such that the sequence of maps
    \[(s,t) \mapsto e^{\tau_k}u(s+s_k,t)) \]
    converges uniformly on compact subsets to a Reeb cylinder
    \[\ol \gamma_\pre: \R \times S^1 \to W, \quad (s,t) \mapsto (Ts, \gamma_\pre(Tt)), \]
    where $\gamma_\pre: \R/T\Z \to M$ is a $T$-periodic Reeb orbit for
    some $T \in \R$. In case $T=0$, $\gamma_\pre$ is a constant point
    in $M$.
  \end{enumerate}
\end{proposition}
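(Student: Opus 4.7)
The plan is to prove (a) via Hofer's bubbling-off argument, then to deduce (b) from the gradient bound in (a) by a direct compactness argument that selects a sequence $s_k$ along which the horizontal energy dies.

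For part (a), suppose for contradiction that $|du|$ is unbounded. Since $|du|$ is locally bounded by interior elliptic regularity for the Cauchy-Riemann equation, there must be points $z_k = (s_k, t_k)$ with $s_k \to \infty$ and $|du(z_k)| \to \infty$. Hofer's lemma replaces $z_k$ by nearby points on whose small disks the local gradient is essentially maximal, and the standard rescaling by $R_k := |du(z_k)|$ together with subtracting the $\R$-coordinate at the center yields $J$-holomorphic disks $v_k$ of increasing radius with $|dv_k| \le 2$ and $|dv_k(0)| = 1$. Arzela-Ascoli and elliptic bootstrapping extract a $C^\infty_{\on{loc}}$-convergent subsequence whose limit $v : \C \to W$ is a non-constant $J$-holomorphic plane whose Hofer energy is bounded by the local Hofer energy of $u$ on the shrinking rescaling disks and hence may be made arbitrarily small. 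On the other hand, the symplectic form $\om_\cpt$ of \eqref{eq:wsymp} tames $J$ near $v(0)$, and the monotonicity lemma for $J$-holomorphic curves gives a lower bound $\int_\C v^*\om_\cpt \ge c > 0$ depending only on $J$ and $\om_\cpt$ near $v(0)$. Since the left side is bounded by $E_\Hof(v)$, this contradicts the upper bound.

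For part (b), finiteness of $E_\hor(u)$ implies that for some sequence $s_k \to \infty$, $\int_{[s_k-1,s_k+1]\times S^1} u^*\Om \to 0$. Set $\tau_k := -u_\R(s_k,0)$ and $w_k(s,t) := e^{\tau_k} u(s+s_k, t)$. The gradient bound from (a) makes $w_k$ equicontinuous; the $\R$-coordinate of $w_k$ is normalized at the origin and thus bounded on compact sets, while the $M$-coordinate lies in compact $M$. Passing to a subsequence, $w_k \to \tilde u$ in $C^\infty_{\on{loc}}$ for some $J$-holomorphic map $\tilde u : \R \times S^1 \to W$ satisfying $E_\hor(\tilde u) = 0$. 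Because $\Om$ tames $J|\ker(\alpha)$ (the $r = 0$ case of horizontal tamedness), the vanishing of $\int \tilde u^*\Om$ forces the $\ker(\alpha)$-component of $d\tilde u$ to vanish, so $d\tilde u$ takes values in the sub-bundle spanned by $\partial_r$ and $R_\alpha$. Writing $\tilde u_M(s,t) = \psi_{f(s,t)}(m_0)$ for $m_0 := \tilde u_M(0,0)$, and using $J\partial_r = R_\alpha$, the $J$-holomorphic equation reduces to the Cauchy-Riemann system for $(\tilde u_\R, f) : \R \times S^1 \to \R^2$. The standard analysis of bounded-gradient harmonic functions on the cylinder, combined with the normalization $\tilde u_\R(0,0) = 0$, yields $\tilde u_\R(s,t) = Ts$ and $f(s,t) = Tt + c$ for some $T, c \in \R$. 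Hence $\tilde u(s,t) = (Ts, \gamma_\pre(Tt))$ with $\gamma_\pre(\tau) := \psi_{\tau + c}(m_0)$, matching the stated form (the $T = 0$ case being the constant one).

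The main obstacle is the energy quantization step in part (a): one must show that a non-constant finite-energy $J$-holomorphic plane in $W$ carries uniformly positive Hofer energy, without invoking the asymptotic Reeb behaviour that the main theorem is designed to establish. I intend to handle this purely locally via the monotonicity lemma applied to $\om_\cpt$ in a tubular neighborhood of $v(0)$, together with $\om_\cpt$-taming of $J$ there, which is independent of any asymptotic considerations.
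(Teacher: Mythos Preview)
The paper does not give its own proof of this proposition; it merely records the two statements with citations to Wendl's lecture notes. So the comparison is against the standard argument found there. Your sketch of part (b) is essentially that standard argument and is fine: the only cosmetic point is that you do not need to select $s_k$ with small local horizontal energy, since $u^*\Om\ge 0$ and $E_\hor(u)<\infty$ already force $E_\hor(u,\{s>S\})\to 0$, which is what you use when passing to the limit.

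Part (a), however, has a real gap. You claim that the Hofer energy of the bubbled plane $v$ is bounded by $E_\Hof(u,D_{\epsilon_k}(z_k))$ and that this quantity can be made arbitrarily small because the disks shrink. The first inequality is correct, but the second is not: Hofer energy is a \emph{supremum} over $\varphi\in\cT$, not the integral of a fixed $L^1$ density, so absolute continuity does not apply. Concretely, $\int_{D_{\epsilon_k}}u^*d(\varphi\alpha)=\int_{\partial D_{\epsilon_k}}(\varphi\circ u_\R)\,u_M^*\alpha$ is controlled only by $\eps_\cpt$ times a boundary term of order $\epsilon_k\sup_{D_{\epsilon_k}}|du|\sim \epsilon_k R_k\to\infty$ in the Hofer--lemma setup. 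All you can conclude is $E_\Hof(v)\le E_\Hof(u)<\infty$, which does not contradict your monotonicity lower bound $E_\Hof(v)\ge c_\monot r^2$.

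What \emph{does} vanish on the rescaling disks is the horizontal energy, since $u^*\Om\ge 0$ is integrable and the disks lie in the tail $\{s>s_k-1\}$. Thus the bubble satisfies $E_\hor(v)=0$, forcing $dv$ to take values in $\on{span}(\partial_r,R_\alpha)$; the same Cauchy--Riemann/Liouville analysis you carry out in part (b), now on the plane $\C$ with $|dv|\le 2$, shows $v(z)=(T\,\Re z + a_0,\ \psi_{T\,\Im z}(m_0))$ for some $T\neq 0$. A direct computation then gives $\int_\C v^*\om_\varphi=\infty$ for every $\varphi\in\cT$, hence $E_\Hof(v)=\infty$, contradicting $E_\Hof(v)\le E_\Hof(u)$. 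This is the quantization step you flag as the main obstacle; the local monotonicity lemma for $\om_\cpt$ is not enough on its own, because it bounds $E_\Hof(v)$ from below while the missing ingredient is a matching upper bound strictly smaller than that.
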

\noindent We call $\gamma_{pre}$ the {\em preliminary limit} of the cylinder $u$.
 % We
% next prove that if the preliminary limit $\gamma_{pre}$ is constant,
% then the map $u$ has a removable singularity.

\subsection{The case of a removable singularity}
We show that in the special case where the preliminary limit obtained in Proposition \ref{prop:sk} is a constant, the map $u$ has a removable singularity at the puncture $s=\infty$.

\begin{proposition}\label{prop:skref}
  {\rm(Removable singularity)}
  Suppose $u$ has finite horizontal energy $\int_{[0,\infty) \times S^1}u^*\Om$, 
  and there is a sequence $s_k \to \infty$ for which the maps
  \[u_M(s_k,\cdot) : S^1 \to M\]
   $C^1$-converge to a constant map.  Then,
  there is a point $w \in \R \times M$ and constants $s_0>0$, $c_u>0$ such that 
  \begin{equation}
    \label{eq:remdecay}
    d_\cyl(u(s,t),w) \leq c_ue^{-s} \quad \forall s \geq s_0, t \in S^1.   
  \end{equation}
\end{proposition}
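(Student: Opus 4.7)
The strategy is to trap the tail $u([s_0,\infty) \times S^1)$ in a small neighborhood of a single point $w \in W$, apply the classical Gromov removable-singularity theorem inside the compact symplectic manifold $(W_\cpt,\om_\cpt)$ of \eqref{eq:wsymp}, and then derive the exponential decay \eqref{eq:remdecay} from the standard tail-energy monotonicity argument.

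The first step is to promote the subsequential $C^1$-convergence $u_M(s_k,\cdot) \to w_M$ into uniform convergence of $u_M$ on $[s_0,\infty) \times S^1$ to the constant $w_M \in M$. The inputs are: (i) $|du|_{L^\infty} < \infty$ from Proposition \ref{prop:sk}; (ii) the tail horizontal energy $\int_{[r,\infty) \times S^1} u^*\Om$ vanishes as $r \to \infty$; and (iii) an $\epsilon$-regularity / mean-value statement for the horizontal component of $u$, which is $J|\ker\alpha$-holomorphic and tamed by $\Om + r\, d\alpha$. Together these force the $M$-diameter of $u$ on any long cylindrical band $[s_k, s_k + L] \times S^1$ to be small, and the subsequential $C^1$-closeness of $u_M(s_k,\cdot)$ to the constant $w_M$ then rules out further drift past $s_k$. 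I would next control $u_\R$: the cylindrical Cauchy-Riemann equations read $\partial_s u_\R = \alpha(\partial_t u_M)$ and $\partial_t u_\R = -\alpha(\partial_s u_M)$, so the growth of $u_\R$ in $s$ and its oscillation on each circle $\{s\} \times S^1$ are both controlled by $du_M$ and by the loop $\alpha$-periods $\int_{S^1} u_M(s,\cdot)^*\alpha$; once $u_M$ is confined near $w_M$ these periods tend to zero, so $u_\R$ stays bounded and converges to some $w_\R \in \R$.

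After these two confinement steps, $u([s_1,\infty) \times S^1)$ lies in a small open set $V \subset W$ around $w := (w_\R, w_M)$ that embeds into the interior of $(W_\cpt,\om_\cpt)$ with $J$ still $\om_\cpt$-tamed on $V$; Gromov's removable-singularity theorem then yields a continuous extension $\lim_{s \to \infty} u(s,t) = w$. Exponential decay is then standard: for a $\varphi \in \cT$ adapted to the confined image, the tail energy $E(r) := \int_{[r,\infty) \times S^1} u^*\om_\varphi$ is nonnegative (by taming, as $u^*\om_\varphi(\partial_s,\partial_t) = \om_\varphi(\partial_s u, J\partial_s u) \geq 0$) and satisfies a differential inequality $E'(r) \leq -2 E(r)$ via the isoperimetric inequality on $S^1$ applied to the small loops $u(r,\cdot)$, yielding $E(r) \lesssim e^{-2r}$; a mean-value inequality for $|du|^2$ then upgrades this $L^2$-decay to the pointwise bound \eqref{eq:remdecay}. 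The principal obstacle is the first step: since $\Om$ need not be exact in the general stable Hamiltonian setting, the usual ``small area forces small diameter'' argument via Darboux charts is not directly available, and must be routed through horizontal tamedness of $J$ and passage to the compactified form $\om_\cpt$ before $\epsilon$-regularity is applicable.
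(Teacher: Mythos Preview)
Your plan has a genuine gap in the first confinement step, and it also works harder than necessary for the exponential decay.

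The proposal to confine $u_M$ uniformly near $w_M$ using only the tail horizontal energy $\int u^*\Om$ does not go through. There is no ``horizontal component of $u$'' to which $\epsilon$-regularity applies; only $du$ has a horizontal projection, and the Reeb component of $du_M$ is tied to $du_\R$ by the Cauchy--Riemann relations you wrote down, not to the horizontal energy. Small $\Om$-area on a long band $[s_k,s_{k+1}]\times S^1$ does not prevent $u_M$ from drifting in the Reeb direction, so ``small horizontal energy $\Rightarrow$ small $M$-diameter'' is false as stated, and your control of $u_\R$ then becomes circular. The paper avoids this by first proving (Lemma~\ref{lem:hof0}) that the \emph{Hofer} tail energy $E_\Hof(u,[s,\infty)\times S^1)\to 0$: the $C^1$-convergence of $u_M(s_k,\cdot)$ to a constant is used, via Stokes, to kill the boundary term $\int_{\{s_k\}\times S^1}(\varphi\circ u_\R)\,u_M^*\alpha$ uniformly in $\varphi$, so that $\int u^*d(\varphi\alpha)$ is small on the tail. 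With the tail Hofer energy in hand, the paper applies monotonicity in $(W_\cpt,\om_\cpt)$ (after recentring via the isometry $\varphi_a\times\Id_M$) directly to the full map $u$, obtaining boundedness of the entire image in $\R\times M$ in one stroke, without separating horizontal and Reeb directions. Your closing remark that the argument ``must be routed through \dots\ passage to the compactified form $\om_\cpt$'' is pointing in exactly this direction, but the missing ingredient is Lemma~\ref{lem:hof0}: you need tail Hofer energy, not tail horizontal energy, to run monotonicity in $W_\cpt$.

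For the decay, the paper's argument is shorter than your isoperimetric--differential-inequality route. Once the image is bounded in some $[a_0,a_1]\times M$, the reparametrized map $v(re^{i\theta}):=u(-\ln r,-\theta)$ extends holomorphically over $0$ by the classical removable-singularity theorem, and mere differentiability of the extension gives $d_\cyl(v(z),v(0))\le c|z|$, i.e.\ $d_\cyl(u(s,t),w)\le c_u e^{-s}$ with rate exactly~$1$. Your energy-decay argument would only produce $e^{-\gamma s}$ for some isoperimetric constant $\gamma>0$, which is weaker than the stated \eqref{eq:remdecay}.
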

We first prove an intermediate result on Hofer energy decaying to zero on the end of the domain cylinder.
\begin{lemma}\label{lem:hof0}
  Let $u$ be a map satisfying the hypothesis of Proposition \ref{prop:skref}. Then,
  \begin{equation}
    \label{eq:hoflim}
    \lim_{s \to \infty}E_\Hof(u, [s,\infty) \times S^1)=0.
  \end{equation}
\end{lemma}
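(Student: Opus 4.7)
My plan is to express $E_\Hof(u, [s, \infty) \times S^1)$ as the horizontal energy on that tail plus a single boundary integral on $\{s\} \times S^1$ that is uniformly controlled in $\varphi \in \cT$, and then to show both pieces vanish. The bridge is Stokes' theorem on the bounded cylinder $[s, s_k] \times S^1$ followed by pushing $s_k \to \infty$ along the sequence furnished by the hypothesis.

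The key preliminary observation is that horizontal tamedness of $J$ forces $u^*\om_\varphi = u^*(\Om + d(\varphi\alpha))$ to be pointwise non-negative for every $\varphi \in \cT$; this is the same ingredient that underlies (\ref{eq:horhof}). Non-negativity serves two distinct purposes: it makes the improper integral over $[s, \infty) \times S^1$ computable as a monotone limit of integrals over $[s, s_k] \times S^1$, and it makes $s \mapsto E_\Hof(u, [s, \infty) \times S^1)$ monotonically non-increasing. Stokes' theorem on $[s, s_k] \times S^1$ gives
\[\int_{[s, s_k] \times S^1} u^* d(\varphi\alpha) = \int_{\{s_k\} \times S^1} u^*(\varphi\alpha) - \int_{\{s\} \times S^1} u^*(\varphi\alpha),\]
and the bound $|\varphi| \leq \seps$ controls each boundary term by $\seps \int_0^1 |\alpha(\partial_t u_M(s', t))|\, dt$ uniformly in $\varphi$. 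Since $u_M(s_k,\cdot)$ converges to a constant in $C^1$, the boundary term at $\{s_k\}$ tends to zero as $k \to \infty$. Passing to the limit and then taking the supremum over $\varphi$ yields
\[E_\Hof(u, [s, \infty) \times S^1) \leq \int_{[s, \infty) \times S^1} u^*\Om + \seps \int_0^1 |\alpha(\partial_t u_M(s, t))|\, dt.\]

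Specializing this inequality to $s = s_k$ finishes the subsequential convergence: the first term vanishes by finiteness of the total horizontal energy, the second by the $C^1$-convergence hypothesis. Monotonicity of the tail Hofer energy in $s$ then upgrades subsequential convergence to genuine convergence as $s \to \infty$. The main obstacle I anticipate is the bookkeeping needed to justify that $\sup_\varphi$ commutes with $\lim_{k \to \infty}$ when passing from Stokes on the bounded cylinder to the identity on the infinite tail; both the pointwise non-negativity of $u^*\om_\varphi$ (for the monotone limit in $k$) and the uniform bound $|\varphi| \leq \seps$ (for the uniformity in $\varphi$) are essential for this interchange.
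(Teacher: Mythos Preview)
Your argument is correct and follows essentially the same route as the paper: Stokes on a bounded cylinder, the uniform bound $|\varphi|<\seps$ to control the boundary terms independently of $\varphi$, the $C^1$-convergence of $u_M(s_k,\cdot)$ to kill the far boundary term, and finiteness of horizontal energy to kill the $\Om$-tail. The only differences are organizational: the paper applies Stokes on $[s_k,s_l]\times S^1$ with both endpoints from the special sequence, whereas you work on $[s,s_k]\times S^1$ and then specialize $s=s_k$; and you are more explicit than the paper about the pointwise non-negativity of $u^*\om_\varphi$ (to justify the improper integral as a monotone limit) and about the monotonicity step that upgrades convergence along $\{s_k\}$ to convergence as $s\to\infty$.
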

\begin{proof}
  Recall that $E_\Hof$ is the supremum of the $\om_\varphi$-area of $u$ where $\varphi : \R \to (-\seps,\seps)$ ranges over increasing diffeomorphisms and $\om_\varphi=\Om + d(\varphi \alpha)$.
  We focus on the term $d(\varphi \alpha)$ since $\int_{[0,\infty) \times S^1} u^*\Om$ is finite. 
    For any $\eps$, there exists $k$ such that for
  any $l \geq k$ and any increasing diffeomorphism $\varphi$,
  \[\int_{[s_k,s_l] \times S^1}u^*d(\varphi \alpha)=\int_{\partial[s_k,s_l]
      \times S^1}(\varphi \circ u_\R)(u_M^*\alpha) <\eps.\]
  Indeed $\varphi \circ u_\R$ is uniformly bounded, and since the sequence $u_M(s_k,\cdot)$ converges to a constant, 
  $|u_M^*\alpha| \to 0$ uniformly on $\{s_k\} \times S^1$ as
  $k \to \infty$. Consequently $\int_{[s_k,\infty) \times S^1} u^*d(\varphi \alpha) \leq \eps$, and \eqref{eq:hoflim} follows.
\end{proof}

The main ingredient in the proof of Proposition \ref{prop:skref} is Gromov's monotonicity result for pseudoholomorphic curves (see for example \cite[Proposition 4.3.1]{Sikorav}), which we now state.
\begin{proposition} \label{prop:monohol} {\rm(Monotonicity)} Let
  $(X,\om)$ be a compact symplectic manifold with a Riemannian metric $g$, and let $U_\J$ be a
  $C^0$-neighborhood on the space of $\om$-tamed almost complex structures.
  There exist constants $c_{\monot}, r_\monot>0$ such that for any $x \in X$,
  $0 < r \leq r_\monot$, a Riemann surface $C$ with boundary $\partial C$
  and a pseudoholomorphic map $u:C \to X$ with respect to a
  domain-dependent almost complex structure $J:C \to U_\J$ whose image
  contains $x$ and $u(\partial C) \subset \partial B_g(x,r)$,
  \[\int_C u^*\om \geq c_{\monot}r^2. \]
\end{proposition}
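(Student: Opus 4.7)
The plan is to establish the classical monotonicity differential inequality $r A'(r) \ge 2 A(r)$ for the function
\begin{equation*}
A(r) := \int_{u^{-1}(B_g(x,r))} u^*\om,
\end{equation*}
and combine it with a lower bound on $A(r)/r^2$ at small scales coming from the assumption $x \in u(C)$. Since $A(r) \le \int_C u^*\om$ (the $\om$-area of $u$ is non-negative by the tamedness of $J$), the resulting estimate transfers to the quantity in the statement.

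The first step is localization. By compactness of $X$, a finite Darboux atlas, and the $C^0$-smallness of $U_\J$, I choose $r_{\monot}$ small enough that on any geodesic ball $B_g(x, r_{\monot})$ identified with a Euclidean ball in $\R^{2n}$, the following hold uniformly in $x$ and in $J(z)\in U_\J$ for all $z \in C$: the metric $g$ is bi-Lipschitz close to the Euclidean metric $g_0$; the symplectic form $\om$ admits a primitive $\lam$ with $|\lam_y| \le (1+\eps)|y|$; and every $J(z)$ is sufficiently $C^0$-close to the standard complex structure $J_0$ that the taming constant of $J(z)$ by $\om$ is bounded below by some $c_0$ arbitrarily close to the compatible value, with $\eps > 0$ arbitrarily small.

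The heart of the proof consists of two estimates, valid at any regular value $r$ of $|u|$ (available for a.e.\ $r$ by Sard's theorem). Stokes' theorem yields
\begin{equation*}
A(r) = \int_{u^{-1}(\partial B_g(x,r))} u^*\lam \le (1+\eps)\, r\, L(r),
\end{equation*}
where $L(r)$ is the ambient length of the image of $u^{-1}(\partial B_g(x,r))$. The co-area formula applied to $\rho\circ u$ with $\rho(y) := |y|^2/2$, combined with the taming inequality $|u^*\om|_{dv} \ge c_0 |du(\tau)|^2$ and the almost-K\"ahler potential identity $u^*(dd^c_{J}\rho) = dd^c(\rho\circ u)$ (valid modulo $O(\eps)$ corrections because $J$ is close to $J_0$ and $\om$ close to $\om_0$), together with the Stokes consequence $\int|\nabla(\rho\circ u)|\,d\ell = 2A(r) + O(\eps A(r))$, gives by Cauchy--Schwarz on the cross-section
\begin{equation*}
L(r)^2 \le \frac{2(1+\eps)}{c_0\, r}\, A'(r)\, A(r).
\end{equation*}
Combining the two estimates yields $rA'(r) \ge 2c_0(1-O(\eps))\, A(r)$; by shrinking $r_{\monot}$ and $U_\J$ so that the right-hand factor is $\ge 2$, we conclude that $A(r)/r^2$ is non-decreasing on $(0, r_{\monot}]$.

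For the initial condition, pick $z_0 \in C$ with $u(z_0) = x$ and use the local $J$-holomorphic normal form (holomorphic coordinates near $z_0$ making $u$ a branched covering of a $J_0$-holomorphic graph) to deduce $\liminf_{r\to 0^+} A(r)/r^2 \ge \pi$. Combined with monotonicity, this produces $A(r) \ge c_{\monot}\, r^2$ for all $r \in (0, r_{\monot}]$, with $c_{\monot}$ depending only on $(X, \om, g)$ and $U_\J$. The main obstacle is arranging that the effective exponent in the differential inequality is at least $2$ uniformly in $x \in X$ and in the domain-dependent $J$; this forces the $C^0$-smallness of the Darboux chart and of $U_\J$, and the perturbative bookkeeping in the K\"ahler potential identity and Cauchy--Schwarz step must be carried out carefully so that the loss $O(\eps)$ is absorbed into the taming slack.
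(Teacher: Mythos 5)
The paper does not actually prove Proposition~\ref{prop:monohol}; it cites \cite[Proposition 4.3.1]{Sikorav}, and the nearest thing to a proof in the paper is the strip version, Theorem~\ref{thm:monot}. Both Sikorav and the paper's Theorem~\ref{thm:monot} avoid the classical Gromov differential inequality $r A'(r)\ge 2A(r)$ and instead use an isoperimetric inequality in a Darboux chart: one shows $E(r)\le c\,\ell(\partial C_r)^2$ (action bounded by length squared) and $E'(r)\ge c'\,\ell(\partial C_r)$ (co-area), yielding $\sqrt{E(r)}\le c''\,E'(r)$. This inequality integrates from $E(0^+)=0$ to $E(r)\ge c_\monot r^2$ for arbitrary positive constants $c,c'$: no tuning of the exponent and no initial multiplicity estimate are needed.

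Your route is the classical one, and as written it has a genuine gap. You need the differential inequality to carry the exponent at least $2$ in order for $A(r)/r^2$ to be non-decreasing; a smaller exponent $\mu<2$ gives only that $A(r)/r^\mu$ is non-decreasing, and the limit $A(\rho)/\rho^\mu=(A(\rho)/\rho^2)\rho^{2-\mu}\to 0$ as $\rho\to 0$ renders the conclusion vacuous. Tracing your two estimates ($A\le(1+\eps)rL$ and $L^2\le\frac{2(1+\eps)}{c_0 r}A'A$) gives $rA'\ge \frac{c_0}{2(1+\eps)^3}A$, whose coefficient is controlled by the taming constant $c_0$ of the almost complex structures in $U_\J$ relative to the standard Darboux form. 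That constant is a property of the \emph{given} neighborhood $U_\J$, and the statement does not permit you to shrink $U_\J$; shrinking $r_\monot$ fixes the Darboux-chart errors but does nothing to $c_0$. For a tamed (not compatible) $J$ far from the standard one, $c_0$ can be well below $1$, so you cannot force the coefficient to reach $2$. You therefore need a different mechanism. Either measure all lengths and the ball $B_r$ in the $J$-compatible metric $g_J$ from the start (and deal with the fact that $J$, hence $g_J$, is domain-dependent), or — much more simply — replace the sharp Gromov inequality by the isoperimetric estimate $A(r)\le c\,L(r)^2$ (as in the paper's proof of Theorem~\ref{thm:monot}), which together with $A'(r)\ge c' L(r)$ yields $\sqrt{A}\le c'' A'$ and integrates to $A(r)\ge c_\monot r^2$ regardless of $c_0$. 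This also lets you drop the $\liminf_{r\to 0} A(r)/r^2 \ge \pi$ input, which in your version requires the Micallef--White local normal form; the isoperimetric route only needs $A(0^+)=0$, which is continuity.
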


\begin{proof}[Proof of Proposition \ref{prop:skref}]
  We first show that the image of $u$ is bounded; the proof is via the monotonicity result which says that 
  the image can not ``wander too much'', since the Hofer energy decays to zero near the puncture. 
  Let $c_\monot, r_\monot>0$ be the constants
   obtained by applying the monotonicity result to  
  the compact symplectic manifold 
  \[(W_\symp, \om_\symp )=([-\seps,\seps] \times M, \Om +d(\pi_\R \alpha),J).   \]
  Choose a small $\eps<\seps, r_\monot$.  Since the maps
  $u_M(s_k,\cdot)$ converge to a constant in $M$, Lemma \ref{lem:hof0}
  says that $E_\Hof(u,[s,\infty) \times S^1)$ goes to zero as $s \to \infty$.
  Let $\sig_0$ be such that
  \begin{equation}
    \label{eq:hofbd}
    E_\Hof(u,[\sig_0,\infty) \times S^1)<c_{\monot}\eps^2.  
  \end{equation}
  Next consider a domain point $z_0 \in [\sig_0,\infty) \times S^1$. Let
  \[a:=u_\R(z_0), \quad m:=u_M(z_0).\]
  Choose an increasing diffeomorphism $\varphi_a : \R \to (-\seps,\seps)$ that satisfies
  \begin{equation}
    \label{eq:phia}
    \varphi_a(s)=s-a \quad \text{on $(a-\eps, a+\eps)$}.   
  \end{equation}
  Then $\Phi_a:=\varphi_a \times \Id_M: W \to W_\symp$ is an isometry on $B_\eps(a,m)$, where
  both $W$ and $W_\symp$ are equipped with the cylindrical metric (Assumption \ref{ass:metric}). 
   Let $C_{z_0} \subset (\Phi_a \circ u)^{-1}(B_\eps(0,m))$
   be the connected component containing $z_0$.
   If $C_{z_0}$ is compact, then $C_{z_0}$ intersects $\{\sig_0\} \times S^1$. Indeed, otherwise, the monotonicity result (Proposition \ref{prop:monohol}) applied to the map
  $\Phi_a \circ u$ and the ball $B_\eps(0,m)$ implies 
%   The map $(\Phi_a \circ u)|_{C_{z_0}}$ is $J$-holomorphic and has an area bound 
  %
  \[c_\monot \eps^2 < \int_{C_{z_0}} (\Phi_a \circ u)^*\om_\symp = \int_{C_{z_0}} u^*\om_{\varphi_a} \leq E_\Hof(u,C_{z_0}),\]
  which contradicts \eqref{eq:hofbd}. Therefore, one of the following two cases occur:
  \begin{description}
  \item[Case 1] For some $z_0 \in [\sig_0,\infty) \times S^1$, 
    $C_{z_0}$ is non-compact. In this case, $C_{z_0}$ intersects
    $\{s\} \times S^1$ for all $s \geq s(z_0)$.  Since $|du| \leq c_0$ for some constant $c_0$,
    we conclude that the image $u([s(z_0),\infty) \times S^1)$ lies
    within a $c_0$-radius of the neighborhood $B_\eps(a,m)$, and
    therefore the image of $u$ is bounded.
  \item[Case 2] The set $C_{z_0}$ is compact for all $z_0 \in [\sig_0,\infty) \times S^1$.
     In this case, for
  any such $z_0$, $C_{z_0}$ intersects $\{\sig_0\} \times
  S^1$.  Consequently,
  \[d_{\cyl}(u(z_0),u(\{\sig_0\} \times S^1))<\eps \quad \forall z_0 \in [\sig_0,\infty) \times S^1,\]
   and therefore, the image of $u$
  is bounded in $\R \times M$.  
  \end{description}

  We now finish the proof of the proposition. Since the image of $u$
  is bounded, there is an interval $[a_0, a_1] \subset \R$ such that
  the image of $u$ is contained in $[a_0,a_1] \times M$.  For any
  increasing diffeomorphism $\varphi : \R \to (-\seps,\seps)$,
  $\om_\varphi$ is a taming symplectic form on the compact manifold
  $[a_0,a_1] \times M$.  The area
  $\int_{\R_{\geq 0} \times S^1}u^*\om_\varphi$ is bounded by
  $E_\Hof(u)$, and is therefore finite.  We apply the removal of
  singularities theorem for compact manifolds \cite[Theorem 4.1.2]{ms:jh} on a reparametrization
  of $u$, namely
  \[v: B_1 \bs \{0\} \to [a_0,a_1] \times M, \quad v(re^{i\theta}):=u(-\ln r, -\theta), \]
  where $B_1 \subset \C$ is the unit ball, and conclude that $v$
  extends holomorphically over $0$.  The differentiability of $v$
  implies that $d_\cyl(v(z),v(0)) \leq c|z|$ for some constant
  $c$. Changing to cylindrical coordinates, we obtain the exponential
  decay \eqref{eq:remdecay} stated in the proposition.
\end{proof}

\subsection{The symplectic case}\label{subsec:sympcase}
In this section, we state and prove the analog of Theorem
\ref{thm:main} in the symplectic case, which will motivate our proof
in the general case. We recall that a stable Hamiltonian structure $M$
is symplectic if the Reeb vector field generates a free $S^1$-action
on $M$.  The $S^1$-action extends to a $\C^\times$-action on $W$ given
by
\[\C^\times \times W \to W, \quad   (re^{i\theta},(a,m)) \mapsto (a+r , e^{i\theta}m).   \]
The $\C^\times$-action is holomorphic if we assume that $W$ is equipped
with a $\C^\times$-invariant cylindrical almost complex
structure. Note that this is a stronger condition than the cylindricity
of $J$, because in general, a cylindrical almost complex structure is
not invariant under Reeb flow. Thus, in this case a cover of a $\C^\times$-orbit
\[\R \times S^1 \ni (s,t) \mapsto e^{T(s+it)}w, \quad w \in W, T \in \Z\]
is a Reeb cylinder. 

\begin{proposition}\label{prop:symp-analog}
  {\rm(Symplectic analog of Theorem \ref{thm:main})} Suppose the
  stable Hamiltonian structure $(M,\alpha, \Om)$ is symplectic, and
  suppose the almost complex structure $J$ on $W$ is
  $\C^\times$-invariant. Let
  \[u=(u_\R, u_M):\R_{\geq 0} \times S^1 \to (W, J)\]
  be   a $J$-holomorphic map
  with finite Hofer energy. Then, there exist constants $T \in \Z$, $c_u>0$, and a point $w \in W$ such that
   \begin{equation}
     \label{eq:decay-symp}
       d_\cyl(u(s,t), e^{T(s+it)}w) \leq c_u e^{- s}.
  \end{equation}
\end{proposition}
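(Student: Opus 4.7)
The plan is to reduce Proposition~\ref{prop:symp-analog} to the removable-singularity case of Proposition~\ref{prop:skref} via the twisting trick sketched in the introduction. First I would invoke Proposition~\ref{prop:sk} to obtain sequences $\tau_k \in \R$ and $s_k \to \infty$ such that the translates $e^{\tau_k} u(\cdot + s_k, \cdot)$ converge uniformly on compact subsets to a Reeb cylinder $(s,t) \mapsto (Ts, \gamma_\pre(Tt))$ for a $T$-periodic Reeb orbit $\gamma_\pre$. In the symplectic setting the Reeb flow is a free $S^1$-action of period one, so every periodic orbit has integer period, forcing $T \in \Z$. (If $T = 0$ the map $u_M(s_k, \cdot)$ already converges to a constant and Proposition~\ref{prop:skref} applies directly to $u$.)

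Assuming $T \ne 0$, I would define the twisted map
\[
u_\tw(s,t) := e^{-T(s+it)} \cdot u(s,t) = \bigl(u_\R(s,t) - Ts,\ \psi_{-Tt}(u_M(s,t))\bigr),
\]
which is well defined on $\R_{\geq 0} \times S^1$ since $T \in \Z$ and the Reeb flow has period one. Then $u_\tw$ is $J$-holomorphic: the map $z \mapsto e^{-Tz}$ is holomorphic from $\R \times S^1$ to $\C^\times$, and by $\C^\times$-invariance of $J$ the $\C^\times$-action is holomorphic, so the composition is $J$-holomorphic. I would then verify the hypotheses of Proposition~\ref{prop:skref} for $u_\tw$. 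Writing $\pi : W \to M/S^1$ for the projection to the symplectic quotient, $\Om$ is a $\pi$-pullback; since the $\C^\times$-action fibres over the identity on $M/S^1$, we have $\pi \circ u_\tw = \pi \circ u$, hence $u_\tw^* \Om = u^* \Om$, which is finite by \eqref{eq:horhof}. For the constant-limit condition: from $u_M(s_k, t) \to \gamma_\pre(Tt) = \psi_{Tt}(\gamma_\pre(0))$ in $C^1$, we get
\[
(u_\tw)_M(s_k, t) = \psi_{-Tt}(u_M(s_k, t)) \longrightarrow \psi_{-Tt}\bigl(\psi_{Tt}(\gamma_\pre(0))\bigr) = \gamma_\pre(0),
\]
which is constant in $t$.

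Applying Proposition~\ref{prop:skref} yields a point $w \in W$ and constants $s_0, c > 0$ with $d_\cyl(u_\tw(s,t), w) \le c e^{-s}$ for $s \ge s_0$. To untwist I would choose the $M$-factor of the cylindrical metric to be $S^1$-invariant (possible by averaging over the compact group $S^1$) so that the $\C^\times$-action is isometric; then
\[
d_\cyl\bigl(u(s,t),\, e^{T(s+it)} w\bigr) = d_\cyl\bigl(e^{T(s+it)} u_\tw(s,t),\, e^{T(s+it)} w\bigr) = d_\cyl(u_\tw(s,t), w) \le c e^{-s}.
\]
Enlarging the constant to absorb the compact initial interval $s \in [0, s_0]$ gives \eqref{eq:decay-symp}; the $S^1$-invariance of $g_M$ is harmless since any two metrics on the compact manifold $M$ are bi-Lipschitz equivalent.

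The main obstacle, and the essential use of the hypothesis, is ensuring that the twist preserves $J$-holomorphicity; this genuinely requires the $\C^\times$-invariance of $J$, which is strictly stronger than cylindricity. The remaining verifications (preservation of horizontal energy, passage of the $C^1$ preliminary limit through $\psi_{-Tt}$, isometry of the metric) are routine once the correct setup is in place. It is the failure of this simple twist in the general Morse-Bott case, where the Reeb vector field does not integrate to a group action, that motivates the doubling trick of Section~\ref{subsec:contcase}.
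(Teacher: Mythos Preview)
Your proof is correct and follows essentially the same approach as the paper's: obtain the preliminary limit from Proposition~\ref{prop:sk}, twist by $e^{-T(s+it)}$, verify the hypotheses of Proposition~\ref{prop:skref} for the twisted map, and then untwist the resulting decay estimate. Your write-up is in fact more explicit than the paper's on several points---why $T \in \Z$, why $u_\tw$ is $J$-holomorphic, and why the untwisting preserves the cylindrical distance---but the underlying argument is the same.
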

\begin{proof}
  We prove the result by transforming the given map $u$ to a map with a removable singularity.
  First, we observe that
  by Proposition \ref{prop:sk}, there is a sequence $s_k \to \infty$
  such that the sequence
  \[u_M(s_k,\cdot) : \R/\Z \to M\]
  converges to a cover of a Reeb orbit, which in this case, is an $S^1$-orbit. Therefore,
  the sequence $u_M(s_k,\cdot)$ converges to $\R/\Z \ni t \mapsto e^{iT t}m $ for some $T \in \Z$ and $m \in M$.
  We claim that the twisted
  cylinder
  \begin{equation}
    \label{eq:symp-twist}
    u_\tw:\R_{\geq 0} \times S^1 \to \R \times M, \quad (s,t) \mapsto
    e^{-T(s+it)}u(s,t)  
  \end{equation}
  has a removable singularity at the infinite end of the cylinder.
  Indeed, Proposition \ref{prop:skref}
  is applicable on $u_\tw$ since it is $J$-holomorphic, the sequence
  of maps $\pi_M \circ u_\tw(s_k,\cdot)$ converges to a constant
  $m \in M$, and $u_\tw$ has finite horizontal energy since
  \[E_\hor(u_\tw)=E_\hor(u) \leq E_\Hof(u).\]
  The removable singularity result (Proposition \ref{prop:skref}) implies
  that $u_\tw(\infty)=w \in W$ for some $w \in W$ and there exists a constant $c_u$
  such that for all domain points $(s,t)$, 
  \begin{equation}
    \label{eq:utw-decay}
    d_\cyl(u_\tw(s,t),w) \leq  c_ue^{-s},  
  \end{equation}
  see \eqref{eq:remdecay}. Since $u_\tw$ is the $e^{-T(s+it)}$-twist
  of $u$, the estimate \eqref{eq:decay-symp} follows from \eqref{eq:utw-decay},
  finishing the proof of Proposition \ref{prop:symp-analog}.
 \end{proof}

\subsection{The contact case}\label{subsec:contcase}
In this section, we prove Theorem \ref{thm:main} in the general case. Although the result is stated for contact manifolds, our proof applies to any stable Hamiltonian structure.
\begin{proof}
  [Proof of Theorem \ref{thm:main}] We describe the idea of the proof.
  By Proposition \ref{prop:sk}, the map $u$ has a preliminary limit,
  which means that there is a sequence $s_k \to \infty$ and a sequence
  $\tau_k \in \R$ such that the sequence of maps
  \begin{equation}
    \label{eq:tauk-conv}
    \R \times S^1 \ni (s,t) \mapsto e^{\tau_k}u(s+s_k,t) \in W  
  \end{equation}
  converges to a Reeb cylinder
  \[(s,t) \mapsto (Ts, \gamma_\pre(t)) \in \R \times M, \]
  where 
  $\gamma_\pre: S^1 \to M$ is a Reeb orbit of period $T \in \R$.  Analogous to
  the proof of the symplectic case, the first step of our proof is to
  twist the map $u$ by the inverse of the time $T$ Reeb flow.  If we
  copy the twisting step from \eqref{eq:symp-twist} we end up with a
  pseudoholomorphic strip
  \begin{equation}
    \label{eq:utw-strip}
    u_\tw: \R \times [0,1] \to \R \times M, \quad (s,t) \mapsto
    (u_\R(s,t)-Ts,\psi_{-Tt}(u_M(s,t)))
  \end{equation}
  that does not close up to form a cylinder, since
  $u_\tw(s,0) \neq u_\tw(s,1)$. The convergence in \eqref{eq:tauk-conv} implies that the
  maps
  \begin{equation}
    \label{eq:utw-conv}
    (s,t) \mapsto e^{\tau_k}u_\tw(s+s_k,t) \in W   
  \end{equation}
  converge uniformly on compact subsets to a constant $(0,m) \in W$
  where $m:=\gamma_\pre(0)$. However, $u_\tw$ does not satisfy
  reasonable boundary conditions that would let us apply results of
  pseudoholomorphic maps.

  We `double' the strip $u_\tw$ to produce a pseudoholomorphic strip in the
  product $(\R \times M)^2$ whose boundary lies on a pair of cleanly
  intersecting Lagrangian submanifolds.  The definition of the double
  is standard, see for example \cite{hutchings:mo}, and in our case it
  is given by
  \begin{equation}
    \label{eq:vdef}
  v:\R \times [0,\thh] \to (\R \times M)^2 \quad (s,t) \mapsto
  (u_\tw(s,t),u_\tw(s,1-t)).   
  \end{equation}
  The map $v$ is pseudoholomorphic with respect to the $t$-dependent almost
  complex structure
  \[\J := (J_t)_{t \in [0,\hh]}, \quad J_t:=\psi_t^*J \oplus
    (-\psi_{1-t}^*J).\]
  For any increasing diffeomorphism $\varphi : \R \to [-\seps,\seps]$,
  the almost complex structure $J_t$ is tamed by the symplectic form
  $\om_\varphi \oplus -\om_\varphi$.  Furthermore, the boundary
  $v(\cdot,\hh)$ maps to the diagonal
  \[\Delta:=\{(x,x): x \in \R \times M\}\]
  and $v(\cdot,0)$ maps to the shifted diagonal
  \[\Delta_T:=\{(x,\psi_T(x)): x \in \R \times M\}.\]
  The submanifolds $\Delta$ and $\Delta_T$ are Lagrangian submanifolds
  in $(\R \times M)^2$ with respect to the symplectic form
  $\om_\varphi \oplus -\om_\varphi$ for any $\varphi$.  The Morse-Bott
  condition on $M$ implies that the Lagrangians $\Delta$, $\Delta_T$
  intersect {\em cleanly} in $(\R \times M)^2$, that is,
  $\Delta \cap \Delta_T$ is a submanifold and
  \[T_x(\Delta \cap \Delta_T)=T_x\Delta \cap T_x\Delta_T \quad \forall
    x \in \Delta \cap \Delta_T.\]
  In the rest of the proof, we show that the image of $v$ is bounded
  using a strip version of the monotonicity theorem that is stated and
  proved in Section \ref{sec:stripmonot}.  Once the image is shown to
  be bounded, we may apply results for the removal of singularity for
  strips in compact symplectic manifolds. This result is standard, but
  we outline the proof for the sake of completeness. In particular, we
  will show that $v$ converges to a point $p \in \Delta \cap \Delta_T$
  at a rate that is exponential in the $\R$-coordinate. The details
  are as follows.

  \vskip .1in {\sc Step 1} : {\em The image of the doubled strip $v$ is bounded.} \\
  We closely follow the proof of Proposition \ref{prop:skref}, which
  is the analogous result for cylinders.  The twisted strip $u_\tw$
  satisfies
  \begin{equation}
    \label{eq:twhof}
    \lim_{s \to \infty}E_\Hof(u_\tw,[s,\infty) \times [0,1])=0. 
  \end{equation}
   The proof of \eqref{eq:twhof} is
  identical to that of the corresponding result (Lemma \ref{lem:hof0})
  for cylinders, and follows from the finiteness of the horizontal energy since by \eqref{eq:horhof}
  \[\int_{\R_{\geq 0} \times [0,1]} u_\tw^* \Om = \int_{\R_{\geq 0} \times S^1}u^*\Om \leq E_\Hof(u) <\infty, \]
  and the fact (see \eqref{eq:utw-conv}) that the sequence of maps $(\pi_M \circ u_\tw)(s_k,\cdot) : [0,1] \to M$ converges in $C^\infty$ to the constant map $m \in M$.

  The proof of Step 1 is by applying the strip monotonicity result to the doubled strip $v$. 
  We first describe a subdomain $C_k$ for each $k$ on which we will
  apply the strip monotonicity result (Theorem \ref{thm:monot}).  The convergence \eqref{eq:utw-conv} of the translations of $u_\tw$ implies that 
  \begin{equation}
    \label{eq:tauconv}
    e^{\tau_k}v(s_k,\cdot) \to ((0,m),(0,m)) \quad \text{in $C^\infty([0,\thh],W^2)$}.
  \end{equation}
  The strip monotonicity result (Theorem \ref{thm:monot}) is applied to
  the symplectic manifold
  \[(W^2, \tilde \om_\varphi), \quad \tilde \om_\varphi:= \om_\varphi \oplus
    (-\om_\varphi)\]
  where $\varphi:\R \to (-\seps,\seps)$ is an increasing
  diffeomorphism  fixed for the rest of the proof; the pair
  of Lagrangians $\Delta$, $\Delta_T$ in $W^2$; and the intersection
  point $((0,m),(0,m)) \in \Delta \cap \Delta_T$.  Let $R$, $c_\monot$
  be the constants obtained from the strip monotonicity result. Choose
  $\eps<R$. By \eqref{eq:twhof}, there exists
  $\sig_0>0$ be such that
  \begin{equation}
    \label{eq:utwhof}
    E_\Hof(u_\tw,[\sig_0,\infty) \times [0,1])<c_\monot\eps^2.   
  \end{equation}
  Choose any $t' \in (0,\hh)$, and let
  $z_k:=(s_k,t') \in \R_{\geq 0} \times [0,\hh]$. By
  \eqref{eq:tauconv}, we may assume that $e^{\tau_k}v(z_k)$ lies in an
  $\frac {\eps} 2$-neighborhood of $((0,m),(0,m))$.  Let
  \[C_k:=\text{ connected component of $v^{-1}(B_{\eps/2}(v(z_k)))$
      containing $z_k$} \subset \R_{\geq 0} \times [0,\thh]. \]
  % 
  % $C_{z_k}$ be
  % the connected component of $v^{-1}(B_{\eps/2}(v(z_k)))$ that
  % contains $z_k$.
  The proof of the boundedness of $v$ splits up into the following two
  cases.

  \vskip .1in  {\sc Case 1} : {\em The domain $C_k$ is non-compact for some $k$.}\\
  Since $C_k$ is non-compact, it intersects the segment
  $\{s\} \times [0,\hh]$ for all $s \geq s_k$. There is a uniform
  bound $|dv|_{L^\infty}<c_0$, since $|du|_{L^\infty}$ is finite and the
  Reeb flow $\psi_t$, $t \in [0,T]$ on $M$ alters the derivative of
  $u$ by a bounded multiplicative factor and  a bounded additive term. Consequently the image
  $v(\R_{\geq s_k} \times [0,\hh])$ is contained in a
  $\frac {c_0} 2$-neighborhood of $v(C_k)$.  Since
  $v(C_k) \subset B_{\eps/2}(v(z_k))$, we conclude that the image
  of $v$ is bounded in $W^2$.

  \vskip .1in {\sc Case 2} : {\em The domain $C_k$ is compact for all $k$.}\\
  In this case we will show that for any $k$, $C_k$ intersects
  $\{\sig_0\} \times [0,\hh]$.  For the sake of contradiction, assume
  that $C_k$ lies in $\{s>\sig_0\}$. We proceed to apply the strip
  monotonicity theorem on the map $(e^{\tau_k}v)|C_k$ mapping to
  the ball $B_{\eps/2}(e^{\tau_k}v(z_k))$, which is contained in
  $B_{R}((0,m),(0,m))$.
  Since $\ol C_k$ is disjoint from $\{0\} \times [0,\hh]$, the non-strip 
  boundary   $\partial C_k \bs (\R_{\geq 0} \times \{0,1\})$ is mapped
  by $e^{\tau_k}v$ 
  to $\partial
  B_{\eps/2}(e^{\tau_k}v(z_k))$. % On $W^2$, we use the symplectic form
  % $\tilde \om_\varphi:=\om_\varphi \oplus (-\om_\varphi)$ where the
  % diffeomorphism
  % $\phi : \R \to (-\seps,\seps)$ is the identity map on the interval
  % $[-\eps,\eps] \subset \R$.
  The strip monotonicity result (Theorem \ref{thm:monot}) then implies that
  \[\int_{C_k}(e^{\tau_k}v)^*\tilde \om_\varphi \geq c_\monot \eps^2,\]
  which contradicts \eqref{eq:utwhof} because
  \[\int_{C_k}(e^{\tau_k}v)^*\tilde \om_\varphi=\int_{\tilde 
      C_k}(e^{\tau_k}u_\tw)^*\om_\varphi \leq
    E_\Hof(u_\tw,[\sig_0,\infty) \times [0,1]), \]
  where
  $\tilde C_k:=\{(s,t) \in \R \times [0,1]: (s,t) \text{ or } (s,1-t) \in
  C_k\}$.  Therefore, our assumption is wrong and $C_k$ intersects
  $\{\sig_0\} \times [0,\hh]$ for all $k$. Hence, the set
  $B_{\eps/2}(v(\{s=\sig_0\}))$ intersects $v(\{s=s'\})$ for all
  $s' \geq \sig_0$. As we mentioned in Case 1, there is a uniform
  bound $|dv|_{L^\infty}<c_0$, and so $v(\{s \geq \sig_0\})$ is
  contained in a neighborhood of $v(\{s=\sig_0\})$ of radius
  $\eps/2 + c_0/2$.

  \vskip .1in {\sc Step 2} : {\em There is a point $(a,m) \in W$ such
    that $\lim_{s \to \infty} v(s,t)=((a,m),(a,m))$. }
  The boundedness of the image of $v$ implies that the convergence of
  the loops $v(s_k,\cdot)$ in \eqref{eq:tauconv} can be refined. In
  particular, there is a constant $a \in \R$ such that after passing
  to a subsequence of $\{s_k\}_k$,
  \[ v(s_k,\cdot) \to w_0:=((a,m),(a,m)) \quad \text{in
    }C^\infty(\R/\Z,W^2) \quad \text{as $k \to \infty$}. \]
  To prove Step 2, we apply the strip monotonicity result again to
  show that there is not enough energy at the end of the domain to repeatedly
  ``exit'' a neighborhood of $w_0$.  Suppose applying
  the strip monotonicity result (Theorem \ref{thm:monot}) on
  $(W^2,\om_\varphi \oplus (-\om_\varphi))$ at the Lagrangian
  intersection point $w_0 \in \Delta \cap \Delta_T$ produces constants
  $R$, $c_\monot$. Suppose the Claim in Step 2 is not true. Then,
  there is a constant $0<r<R/2$ such that for any $k$ there is a point
  $z_k'=(s_k',t_k)$ on the strip $\R \times [0,\hh]$ with $s_k'>s_k$
  and such that $v(z_k') \in \partial B_r(w_0)$.  Choose $k_0$ such
  that
  \begin{equation}
    \label{eq:k0cond}
    v(s_k,\cdot) \subset B_{r/2}(w_0) \enspace \forall k \geq k_0, \quad E_\Hof(u_\tw,\{s>s_{k_0}\})<c_\monot r^2/4.   
  \end{equation}
  The latter  condition implies that
  \begin{equation}
    \label{eq:omWbd}
    \int_{[s_{k_0},\infty) \times [0,1/2]}v^*\tilde \om_\varphi< c_\monot r^2/4.  
  \end{equation}
  Choose $k_1>k_0$ such that $s_{k_1} > s'_{k_0}$. 
  We will produce a lower bound $\tilde \om_\varphi$-area using the facts
  \[v(s_{k_0},\cdot) \subset B_{r/2}(w_0), \quad v(s_{k_1},\cdot) \subset B_{r/2}(w_0), \quad v(z_{k_0}') \in \partial B_r(w_0). \]
  % We will apply the strip monotonicity lemma on the ball
  % $B_{r/2}(v(z_{k_0}')) \subset W^2$, and on the domain
  % $C' \subset \R \times [0,1/2]$ defined as
Define 
  \[C':=\text{ connected component of
      $v^{-1}(\ol B_{r/2}(v(z_{k_0}')))$ containing $z_{k_0}'$}.\]
Since 
  $C' \subset (s_{k_0},s_{k_1}) \times [0,\hh]$, 
  $v(\partial C' \bs (\R \times \{0,\hh\}))$ lies on
  $\partial B_{r/2}(v(z_{k_0}'))$. The strip monotonicity result then
  implies that
  \[\int_{C'}v^*(\om_\varphi \oplus (-\om_\varphi)) > c_\monot r^2/4,\]
  which contradicts \eqref{eq:omWbd}. Therefore, the claim in Step 2
  follows.

  \vskip .1in {\sc Step 3} : {\em Finishing the proof.}\\
  We apply the exponential convergence result (Proposition \ref{prop:expdecay}) to
  the map $v:[0,\hh] \to W^2$ and obtain constants $c_u$, $\gamma>0$ such that
  \[d_\cyl(v(s,t),((a,m),(a,m))) \leq c_ue^{-\gamma s}\]
  for all $s \geq 0$. Since $v$ is the double of $u_\tw$, we get
  \[d_\cyl(u_\tw(s,t),(a,m)) \leq c_ue^{-\gamma s}. \]
  The map $u$ can be recovered from $u_\tw$ as $u(s,t)=(u_{\tw,\R} + Ts, \psi_{tT}(u_{\tw,M}))$, where $u_\tw=(u_{\tw,\R}, u_{\tw,M})$. Therefore, 
  $u$ exponentially converges to the Reeb cylinder $\ol \gamma(s,t):=(a+Ts, \psi_{tT}(m))$, finishing the proof of Theorem \ref{thm:main}. 
\end{proof}

\section{A monotonicity result for strips}\label{sec:stripmonot}

In this section, we state and prove an analogue of Gromov's
monotonicity result for pseudoholomorphic maps on strips. The result applies
to neighborhoods of intersections of Lagrangians, without requiring
that the image of the map contain a Lagrangian intersection point. The
domains of the maps occurring in this result are subsets $C$ of the
strip $\R \times [0,1]$. In particular, we assume that
$C \subset \R \times [0,1]$ is a manifold with corners whose
codimension one boundary is a union of two kinds of strata, namely the
{\em strip boundary} and the {\em non-strip boundary} defined as
\[\partial_{\st}C:= C \cap (\R \times \{0,1\}), \quad \partial_\ns C:=\ol{\partial C \cap (\R \times (0,1))}\]
respectively,
and the codimension two boundary consists of points in $\partial_\st C \cap \partial_\ns C$.

\begin{theorem}\label{thm:monot}
  {\rm(Monotonicity for strips)}
  Let $L_0, L_1$ be cleanly intersecting Lagrangian submanifolds in a
  symplectic manifold $(\X,\om)$ equipped with a Riemannian metric $g$, and let $U_\J$ be a
  $C^0$-neighborhood on the space of tamed almost complex structures.
  Let $p \in L_0 \cap L_1$.  There are constants $R>0$,
  $c_\monot>0$ such that
  \begin{enumerate}
  \item for any point $x$ and $r>0$ such that $B_r(x) \subset B_R(p)$,
  \item a compact surface with corners $C \subset \R \times [0,1]$,
  \item and a map $u: C \to \X$ that is pseudoholomorphic with respect to a
    domain-dependent almost complex structure $J:C \to U_\J$, and which
    satisfies the Lagrangian boundary conditions
    \[u(C \cap (\{i\} \times \R)) \subset L_i \quad \text{for
        $i=0,1$ and} \quad u(\partial_\ns C) \subset \partial
      B_r(x), \]
    and $x \in u(C)$,
  \end{enumerate}
  the symplectic area of $u$ is bounded as
  \[\int_C u^*\om \geq c_\monot r^2. \]
\end{theorem}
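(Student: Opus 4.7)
The plan is to adapt the proof of Gromov's monotonicity result (Proposition \ref{prop:monohol}) to the setting of Lagrangian boundary conditions, using the clean intersection hypothesis to control the new boundary contributions. By a Weinstein-type normal form for cleanly intersecting Lagrangian pairs (see, e.g., Pozniak), I first choose $R>0$ small enough that $B_R(p)$ carries a Darboux chart centered at $p$ in which $L_0$ and $L_1$ become linear Lagrangian subspaces, and such that every $J \in U_\J$ is uniformly tamed by $\om$ with respect to the flat metric on the chart.

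For $x \in B_R(p)$, let $\lambda_x$ denote the Darboux primitive of $\om$ centered at $x$, which satisfies $|\lambda_x|_y \leq C|y-x|$. Since the linear Lagrangians $L_0,L_1$ are exact in the chart, there are primitives $\lambda_i$ of $\om$ with $\lambda_i|_{L_i}=0$, and the differences can be written $\lambda_x - \lambda_i = df_i$ with $f_i(x)=0$. A direct computation in the chart shows $|f_i(y)| \leq C|y-x|^2$ for $y \in L_i$. For $0<\rho<r$ set $C_\rho := u^{-1}(B_\rho(x))$, $a(\rho) := \int_{C_\rho} u^*\om$, and $\sigma_\rho := u^{-1}(\partial B_\rho(x)) \cap C$. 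Since $\rho<r$, the non-strip boundary $\partial_\ns C$ does not meet $C_\rho$, so $\partial C_\rho = \sigma_\rho \sqcup \tau_0 \sqcup \tau_1$ with $\tau_i := C_\rho \cap (\R \times \{i\})$ mapping into $L_i$ and having endpoints on $\sigma_\rho \cap L_i$. Applying Stokes with $\om = d\lambda_x$, and using $u^*\lambda_x = u^*\lambda_i + d(f_i \circ u)$ on $\tau_i$ together with $u^*\lambda_i|_{\tau_i}=0$, gives
\[ a(\rho) = \int_{\sigma_\rho} u^*\lambda_x + \sum_i \bigl[f_i \circ u\bigr]_{\partial \tau_i} \leq C\rho \cdot L(\rho) + C\rho^2, \]
where $L(\rho)$ is the image length of $\sigma_\rho$; the quadratic error comes from the endpoints of $\tau_i$ lying in $L_i \cap \partial B_\rho(x)$, where $|f_i| \leq C\rho^2$.

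Combining this with the $J$-holomorphic isoperimetric inequality $a(\rho) \leq C'' L(\rho)^2 + O(\rho^2)$ and the coarea bound $L(\rho) \leq c^{-1}\, a'(\rho)$ (both adapted to the strip setting via the same boundary decomposition) produces a differential inequality of the form $\sqrt{a(\rho)} \leq C''' a'(\rho) + O(\rho)$. The hypothesis $x \in u(C)$ ensures $\liminf_{\rho \to 0} a(\rho)/\rho^2 > 0$, since $u$ is $J$-holomorphic and passes through $x$, whose preimage in $C$ is either an interior point or a strip boundary point. Integrating from such a small $\rho_0$ up to $r$ and handling the $O(\rho)$ term by a dichotomy (either $a(\rho) \leq 2C\rho^2$, giving the bound trivially, or $a(\rho) > 2C\rho^2$, in which case the $\rho^2$ error is absorbed) yields the claimed bound $a(r) \geq c_\monot r^2$.

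The main obstacle is the second step: obtaining the quadratic (rather than merely linear) bound $O(\rho^2)$ for the strip-boundary contribution to $a(\rho)$. The clean intersection hypothesis enters precisely here, through the Weinstein-type normal form in which $L_0$ and $L_1$ are linear and hence exact, so that the correction functions $f_i$ vanish to second order at $x$ along $L_i$. Without clean intersection the $f_i$ would only be controlled linearly, the Lagrangian boundary contributions would dominate, and the quadratic lower bound on $a(r)$ would be destroyed.
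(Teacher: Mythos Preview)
Your strategy has a real gap in the Stokes step. The correction term $\sum_i [f_i\circ u]_{\partial\tau_i}$ is a sum over \emph{all} endpoints of the Lagrangian boundary arcs $\tau_i$, and the number of such endpoints equals twice the number of arc components of $\sigma_\rho$. You bound each endpoint contribution by $C\rho^2$, but you never bound the number of components, so the total error is $C N(\rho)\rho^2$ with $N(\rho)$ uncontrolled; the bound $a(\rho)\le C\rho L(\rho)+C\rho^2$ as written is not justified. Your final dichotomy is also backwards: the case $a(\rho)\le 2C\rho^2$ is an \emph{upper} bound and does not give the desired lower bound ``trivially''. Finally, the sentence about where clean intersection enters is off: exactness of each $L_i$ in a contractible Darboux chart is automatic and has nothing to do with clean intersection.

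The paper avoids all of this with one observation you missed: in the Pozniak chart there is a \emph{single} primitive $\lambda=\sum x_i\,dy_i$ that vanishes on \emph{both} $L_0$ and $L_1$ simultaneously. Hence $\int_{\tau_0\cup\tau_1}u^*\lambda=0$ exactly, with no endpoint corrections and no component counting. One then proves a per--path isoperimetric inequality $\int_\gamma\lambda\le c\,\ell(\gamma)^2$ for any arc with endpoints on $L_0\cup L_1$ (this is where clean intersection is genuinely used: in the linear model the two Lagrangians are at distance $\ge\sqrt{2}\,r$ outside $B_r(L_0\cap L_1)$, so $|\gamma(\theta)|\le c\,\ell(\gamma)$ and a cone estimate gives the quadratic bound). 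Summing over components, $E(r)=\sum_k\int_{\gamma_k}u^*\lambda\le c\sum_k\ell(\gamma_k)^2\le c\,L(r)^2$, and combined with the coarea bound $E'(r)\ge cL(r)$ one gets $\sqrt{E}\le cE'$ with no error terms, which integrates directly to $E(r)\ge c_\monot r^2$. If you want to rescue your route, you would need precisely this per--component isoperimetric bound, at which point the $\lambda_x$/$f_i$ machinery becomes superfluous.
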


For pseudoholomorphic curves, the symplectic area is equal to metric area.
For use in the proof of the result, we recall 
the notion of metric area
from \cite[Section 2.2]{ms:jh}.
For a tamed almost complex structure $J$ on a symplectic manifold $(\X,\om)$, and a map $u: C \to \X$
on a Riemann surface $C$, the metric area is 
\begin{equation}
  \label{eq:enexp1}
 E_\om(u,C):= \thh \int_C |du|_J^2 \om_C,
\end{equation}
where $\om_C \in \Om^2(C)$ is an area form on $C$, and for any $z \in C$,
\begin{equation}
  \label{eq:enexp2}
  |du_z|_J^2:=\tfrac 1 {|v|^2_C}(|du(v)|_J^2 + |du(j_C v)|_J^2), 
\end{equation}
where $v \in T_z C$ is a non-zero vector, and 
in the right hand side $|\cdot|_J$ is the norm  $(\om(\cdot, J\cdot))^{1/2}$
on $T\X$, 
and $|\cdot|_C$ is the norm $\om_C(\cdot, j_C \cdot)$ on $TC$.
The quantity $|du|_J^2 \om_C$ is independent of $\om_C$ and $v$. Given conformal coordinates  $(s+it)$ on $C$, we have 
\[E_\om(u,C)=\thh \int_C(|\partial_s u|_J^2 + |\partial_t u|_J^2) ds \wedge dt.  \]
The almost complex structure $J$ is allowed to be domain-dependent, that is, $J$ is a map from $C$ to the space of tamed almost complex structures, in which case, the metric $|\cdot|_J$ is also domain-dependent. If the map $u: C \to M$ is $J$-holomorphic, then
\begin{equation}
  \label{eq:enexp}
  \int_C u^*\om=E_\om(u,C).
\end{equation}
The following is frequently used.

\begin{observation}{\rm(Uniform equivalence of metrics)}
  \label{obs:eqmet}
  Let $(\X,\om)$ be a compact symplectic manifold with a Riemannian metric $g$,
  $C$ is a Riemann surface,
  $U_\J$ is a $C^0$-neighborhood in the space of $\om$-tamed almost complex structures on $\X$, 
  and $J: C \to U_\J$ is a domain-dependent tamed almost complex structure. Then,
  there is a constant $c>0$ such that 
  for any $z \in C$,  $v \in T_{u(z)}\X$,
  \begin{equation}
    \label{eq:eqmet}
    c^{-1}|v|_g \leq |v|_{J(z)} \leq c|v|_g.  
  \end{equation}
\end{observation}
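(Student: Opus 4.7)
The plan is first to prove the norm equivalence for a single fixed tamed almost complex structure $J_0 \in U_\J$ using compactness of $\X$, and then extend to all $J \in U_\J$ by a $C^0$-perturbation argument. Since $\X$ is compact and all constructions are pointwise in $x \in \X$, the variable $z \in C$ plays no real role: the bounds will hold uniformly upon substituting $J = J(z)$.

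First, fix any $J_0 \in U_\J$. The function $(x,v) \mapsto \omega_x(v, J_0 v)$ is continuous on $T\X$, strictly positive off the zero section (by $\omega$-tameness of $J_0$), and homogeneous of degree two in $v$. Restricting to the compact $g$-unit sphere bundle $S\X := \{(x,v) \in T\X : |v|_g = 1\}$, continuity and positivity yield constants $0 < m \leq M < \infty$ with $m \leq \omega_x(v, J_0 v) \leq M$ on $S\X$; homogeneity then gives $m |v|_g^2 \leq |v|_{J_0}^2 \leq M |v|_g^2$ for all $(x,v) \in T\X$.

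Second, I would bound the deviation for $J$ near $J_0$ in $C^0$. By compactness of $\X$ and continuity of $\omega$ and $g$, there is a constant $C_\omega$ depending only on $(\X, \omega, g)$ such that $|\omega_x(v, A v)| \leq C_\omega \|A\|_{g,\on{op}} |v|_g^2$ for every endomorphism $A$ of $T_x \X$, where $\|\cdot\|_{g,\on{op}}$ denotes the pointwise $g$-operator norm. Applying this with $A = J - J_0$, and using that on the compact $\X$ the $C^0$-topology on sections of $\on{End}(T\X)$ controls the supremum of $\|\cdot\|_{g,\on{op}}$, we may shrink $U_\J$ if necessary so that $\|J - J_0\|_{C^0} \leq \varepsilon := m/(2 C_\omega)$ for every $J \in U_\J$. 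Then for all $(x,v) \in T\X$,
\begin{equation*}
\tfrac{m}{2} |v|_g^2 \;\leq\; |v|_J^2 \;=\; |v|_{J_0}^2 + \omega_x\bigl(v,(J-J_0)v\bigr) \;\leq\; \bigl(M + \tfrac{m}{2}\bigr) |v|_g^2.
\end{equation*}
Taking square roots and setting $c := \max\bigl(\sqrt{2/m},\,\sqrt{M + m/2}\bigr)$ yields the claim, with $c$ depending only on $\X$, $\omega$, $g$, and $U_\J$ (and not on $z$).

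The only mild subtlety is the implicit shrinking of $U_\J$: the prescribed neighborhood may need to be replaced by a smaller $C^0$-neighborhood on which the perturbation estimate closes. Since the observation is used only qualitatively to convert between symplectic area and metric area in the monotonicity argument, this is harmless, and one fixes the shrunken sub-neighborhood once and for all at the outset.
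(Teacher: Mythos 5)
The paper leaves this as an unproven ``Observation,'' so there is no in-text proof to compare against. Your argument is correct and is the standard one a reader would supply: bound $\omega_x(\cdot,J_0\cdot)$ above and below on the compact unit sphere bundle $S\X$ for a fixed reference $J_0\in U_\J$, then propagate the bounds to nearby $J$ via the estimate $|\omega_x(v,(J-J_0)v)|\le C_\omega\|J-J_0\|_{C^0}|v|_g^2$. You also correctly flag the only genuine subtlety: read literally, ``a $C^0$-neighborhood'' carries no size restriction, and without one the constant $c$ need not be uniform (the lower bound $m/2$ can fail as $J$ ranges over a large set of tamed structures). Your fix---shrinking $U_\J$ so that $\|J-J_0\|_{C^0}\le m/(2C_\omega)$---amounts to making explicit the implicit assumption that $U_\J$ is a sufficiently small $C^0$-ball, which is how the observation is actually used in Theorem~\ref{thm:monot} and Proposition~\ref{prop:expdecay}, where $U_\J$ is fixed once at the outset. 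One cosmetic note: the statement's ``$v\in T_{u(z)}\X$'' references a map $u$ that never appears in the hypotheses (a leftover from the application); you correctly read it as an arbitrary $v\in T_x\X$ for any $x\in\X$, with the $z$-dependence entering only through $J(z)$.
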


\subsection{Isoperimetric inequality}
We give an isoperimetric inequality on a symplectic manifold $(\X,\om)$ equipped
with a pair of cleanly intersecting Lagrangian submanifolds $L_0$, $L_1$. Given a
short path whose end-points lie on the Lagrangians, the isoperimetric
inequality bounds the `action' of the path, which is defined below.
For our application, it suffices to focus on paths that lie in fixed Darboux neighborhoods.

\begin{definition}
  {\rm (Adapted Darboux chart)}
  Let $L_0, L_1 \subset (\X,\om)$ be a pair of cleanly intersecting Lagrangian submanifolds and $p \in L_0 \cap L_1$. 
A Darboux chart % $p \in L_0 \cap L_1$ %is a contractible neighborhood $U \subset \X$ of $p$ symplectocmorphic to an open set $V \subset \R^{2n}$ via
    \[\phi : U \to V \subset \R^{2n}, \quad \phi(p)=0, \quad \phi^*\left(\sum_i dx_i \wedge dy_i\right)=\om  \]
    on $U \subset \X$ is {\em adapted to $L_0$, $L_1$} if 
    \begin{multline}
      \label{eq:lageuc}
      \phinv(\{x_1=\dots=x_n=0\})=L_0, \\
      \phinv(\{x_1=\dots=x_m=y_{m+1}=\dots=y_n=0\})=L_1.    
    \end{multline}
    The primitive of $\om$, given by
    $\lam:=\phi^*(\sum_i x_i dy_i)$ vanishes on the Lagrangians $L_0$, $L_1$.
\end{definition}

\begin{proposition}{\rm(Pozniak \cite[Proposition 3.4.1]{Poz:thesis})}
  \label{prop:poz}
  Suppose $L_0$, $L_1$ are cleanly intersecting Lagrangian
  submanifolds in a symplectic manifold $(\X,\om)$.  Any
  $p \in L_0 \cap L_1$ has a Darboux chart adapted to $L_0$, $L_1$.
\end{proposition}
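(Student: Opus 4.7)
The plan is to reduce to the flat symplectic model $(\R^{2n}, \omega_{\mathrm{std}})$ via a Moser-type argument. First I would establish a linear normal form at $p$; then build a smooth (non-symplectic) chart sending the pair $(L_0, L_1)$ to $(\Lambda_0, \Lambda_1)$; and finally correct this chart to a Darboux chart via a Moser isotopy that fixes both Lagrangians setwise.

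\textbf{Linear normal form and smooth straightening.} Set $K := T_p L_0 \cap T_p L_1$, which equals $T_p(L_0 \cap L_1)$ by clean intersection; let $m := \dim K$. Since $L_0, L_1$ are Lagrangian, $K^\omega = T_p L_0 + T_p L_1$, and the images of $T_p L_0, T_p L_1$ in the symplectic quotient $K^\omega/K$ (of dimension $2(n-m)$) are transverse Lagrangians. A standard symplectic linear algebra argument then yields a symplectic basis $(e_i, f_i)_{i=1}^n$ of $T_pX$ with $T_p L_0 = \on{span}(f_i)$ and $T_p L_1 = \on{span}(f_1,\dots,f_m,e_{m+1},\dots,e_n)$. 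Using that $L_0 \cap L_1 \subset L_0$ is a submanifold through $p$, choose coordinates $(y_1,\dots,y_n)$ on $L_0$ near $p$, dual to $(f_i)$ at $p$, in which $L_0 \cap L_1 = \{y_{m+1}=\cdots=y_n=0\}$; extend these to a chart $(x,y)$ on $X$ with $L_0 = \{x = 0\}$ and $(\partial_{x_i})_p = e_i$. In this chart $L_1$ passes through $0$ with tangent space $\Lambda_1$, and a further diffeomorphism of a neighborhood of $0$ in $\R^{2n}$ that preserves $\Lambda_0$ and is the identity to first order at $0$ (obtained by flowing along a suitable vector field vanishing on $\Lambda_0 \cap \Lambda_1$) straightens $L_1$ to $\Lambda_1$. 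The composite gives a smooth chart $\phi_0 : U \to V \subset \R^{2n}$ with $\phi_0(p) = 0$, $\phi_0(L_i) = \Lambda_i$, and $(d\phi_0)_p$ symplectic.

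\textbf{Moser correction.} Let $\omega_0 := \omega$ and $\omega_1 := \phi_0^* \omega_{\mathrm{std}}$; these symplectic forms agree at $p$, and both make $L_0$ and $L_1$ Lagrangian. Hence the linear interpolation $\omega_t := (1-t)\omega_0 + t\omega_1$ is symplectic on a possibly smaller neighborhood of $p$ and satisfies $\omega_t|_{L_i} = 0$ for all $t$ and $i$. Now I would invoke a relative Poincar\'e lemma to produce a smooth $1$-form $\tau$ with $d\tau = \omega_1 - \omega_0$ and $\tau|_{L_0} = \tau|_{L_1} = 0$: pushing forward to the chart, $\Lambda_0$ and $\Lambda_1$ are linear subspaces and hence invariant under the radial contraction $\Phi_s(z) = sz$, so the standard radial homotopy operator produces a primitive whose restriction to $\Lambda_i$ vanishes (because $(\omega_1-\omega_0)|_{\Lambda_i} = 0$ and $\Phi_s$ preserves $\Lambda_i$). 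Solve $\iota_{X_t} \omega_t = -\tau$ for a time-dependent vector field $X_t$; the vanishing $\tau|_{L_i} = 0$ combined with the $\omega_t$-Lagrangianity of $L_i$ forces $X_t$ to be tangent to $L_i$ along $L_i$, so its flow $\psi_t$ preserves both Lagrangians setwise (and fixes $p$, since $\tau$ vanishes at the image of $p$). By the usual Moser calculation, $\psi_1^* \omega_1 = \omega_0$, so $\phi := \phi_0 \circ \psi_1$ is the desired adapted Darboux chart; the primitive $\lambda = \phi^* \sum_i x_i dy_i$ automatically vanishes on $L_0$ and $L_1$ since $\sum_i x_i dy_i$ vanishes on $\Lambda_0$ and $\Lambda_1$.

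\textbf{Main obstacle.} The main subtlety is the relative Poincar\'e lemma producing a primitive of $\omega_1 - \omega_0$ that vanishes on \emph{both} Lagrangians simultaneously. Were only one Lagrangian present, this would be the standard relative Poincar\'e lemma. Its success here depends crucially on having first straightened both Lagrangians to linear subspaces of $\R^{2n}$: only then is there a single contracting homotopy ($\Phi_s(z) = sz$) that preserves both $\Lambda_0$ and $\Lambda_1$, allowing the radial homotopy operator to deliver a primitive compatible with both. The clean intersection hypothesis enters at exactly this step, in guaranteeing the linear normal form.
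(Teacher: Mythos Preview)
The paper does not supply its own proof of this proposition: it simply attributes the result to Pozniak \cite[Proposition 3.4.1]{Poz:thesis} and adds a sentence explaining that compactness of $L_0\cap L_1$ is unnecessary since only a neighborhood of a single point is needed. There is therefore nothing to compare against at the level of argument.

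Your Moser-type proof is the standard route (and is in the spirit of Pozniak's own argument). The three steps---linear normal form via the symplectic reduction $K^\omega/K$, smooth straightening of the pair $(L_0,L_1)$ to the linear pair $(\Lambda_0,\Lambda_1)$, and Moser correction using a radial homotopy primitive---are all sound. Two small points worth making explicit: in the smooth straightening, the reason you can move $L_1$ onto $\Lambda_1$ while keeping $\Lambda_0$ fixed is precisely that the graph functions describing $L_1$ over $\Lambda_1$ vanish on $\Lambda_0\cap\Lambda_1$ (this follows because you arranged $L_0\cap L_1$ to sit in $\Lambda_0\cap\Lambda_1$ beforehand), so the obvious ``subtract the graph'' diffeomorphism already preserves $\Lambda_0$; and in the relative Poincar\'e lemma, the vanishing of the radial primitive $\tau$ on each $\Lambda_i$ is exactly because for $x\in\Lambda_i$ the Euler field insertion pairs two vectors in $T\Lambda_i$, where $\omega_1-\omega_0$ vanishes. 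You have identified both of these correctly as the places where the clean-intersection hypothesis and the preliminary linearization do their work.
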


\noindent The result  \cite[Proposition 3.4.1]{Poz:thesis} requires the
intersection of the Lagrangians to be compact.  Our result does
not require compactness, because it is a result about the neighborhood
of a single point in $L_0 \cap L_1$, in contrast to Pozniak's result
which is a neighborhood theorem for all of $L_0 \cap L_1$.

 \begin{definition} {\rm(Action functional)}
   Let $L_0$, $L_1$ be a pair of cleanly intersecting Lagrangian
   submanifolds in a symplectic manifold $(\X^{2n},\om)$.  Let
   $U \subset \X$ be a Darboux ball centered at a point
   $p \in L_0 \cap L_1$ with coordinates adapted to $L_0$, $L_1$, and with a primitive $\lam \in \Om^1(U)$.  Let
   $\gamma$ be
   \begin{itemize}
   \item a path $\gamma : [0,1] \to U$ with $\gamma(0), \gamma(1) \in L_0 \cup L_1$,
   \item or a loop $\gamma :\R/\Z \to U$.
   \end{itemize}
   Then the {\em action} of $\gamma$ is
   \[a(\gamma,U):=\int_0^1 \gamma^*\lam. \]
\end{definition}

The action is equal to the $\om$-area of the disk or a half-disk or a
sector of a disk bounded by the path $\gamma$. For example, in the
case when $\gamma$ is a path with $\gamma(0) \in L_0$,
$\gamma(1) \in L_1$, if  $u:C \to U$ is any map
on a sector of a disk $C:=\{re^{i\theta}: r \in [0,1], \theta \in [0,\pi/2]\}  \subset \C$
 satisfying 
\[u(\{\theta=0\}) \subset L_0, \quad u(\{\theta=\pi/2\}) \subset L_1, \quad u|\{r=1\}=\gamma,\]
then, since $\om=d\lam$ and $\lam$ vanishes on $L_0$ and $L_1$, we have $\int_C u^*\om=a(\gamma,U)$.

\begin{proposition}{\rm(Isoperimetric inequality)}
  \label{prop:iso}
  Let $(\X,\om)$ be a symplectic manifold with cleanly intersecting Lagrangians $L_0$, $L_1$
  and a metric $g$, and let $U \subset \X$ be a bounded Darboux ball centered at $p$ with coordinates adapted to $L_0$. $L_1$.
  Then, there is a constant $c>0$ such that for any path $\gamma : [0,1] \to U$ with end-points in
  $L_0 \cup L_1$ or a loop $\gamma : S^1 \to U$,
  \begin{equation}
    \label{eq:isop}
    a(\gamma,U) \leq c \ell_g(\gamma)^2  
  \end{equation}
  where $\ell_g(\gamma)$ is the length of $\gamma$ with respect to the metric $g$.
\end{proposition}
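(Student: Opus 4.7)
The plan is to pull everything back through the adapted Darboux chart $\phi : U \to V \subset \R^{2n}$ supplied by Proposition \ref{prop:poz}. Since $U$ is bounded and $g$ is continuous, $g$ is uniformly comparable to the Euclidean metric on $V$, so I may replace $\ell_g$ by the Euclidean length $\ell$ at the cost of a multiplicative constant. I would split coordinates as $x = (x', x'')$, $y = (y', y'')$ with $x' = (x_1, \dots, x_m)$ and $y' = (y_1, \dots, y_m)$; then $L_0 = \{x = 0\}$, $L_1 = \{x' = 0,\ y'' = 0\}$, $L_0 \cap L_1 = \{x = 0,\ y'' = 0\}$, and the primitive $\lam = \sum_i x_i\, dy_i$ vanishes on $L_0 \cup L_1$.

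First I would handle the case of a loop $\gamma : S^1 \to V$ of length $L$ using the standard isoperimetric inequality in $\R^{2n}$: a cone-type filling gives a smooth disk $D$ with $\partial D = \gamma$ and Euclidean area at most $c_1 L^2$. Since $\om_{\on{std}} = \sum dx_i \wedge dy_i$ has bounded pointwise norm, Stokes then yields
\[
 a(\gamma, U) = \int_\gamma \lam = \int_D \om_{\on{std}} \leq \|\om_{\on{std}}\|_\infty \cdot \on{area}(D) \leq c_2 L^2,
\]
which is the desired bound for loops.

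For paths I would close $\gamma$ up into a loop by appending straight Euclidean segments lying in $L_0 \cup L_1 \cup (L_0 \cap L_1)$. Because $\lam$ vanishes on $L_0$ and $L_1$, these extensions change neither the action nor the Lagrangian boundary condition, and they remain in the Darboux ball because the Lagrangians appear as linear subspaces. If both endpoints of $\gamma$ lie on the same Lagrangian, the closing segment has length at most $|\gamma(0) - \gamma(1)| \leq \ell(\gamma)$. In the mixed case $\gamma(0) \in L_0$, $\gamma(1) \in L_1$, I would insert three segments through the intermediate points $q_i := (0, 0, y'(\gamma(i)), 0) \in L_0 \cap L_1$: from $\gamma(1)$ to $q_1$ inside $L_1$, from $q_1$ to $q_0$ inside $L_0 \cap L_1$, and from $q_0$ to $\gamma(0)$ inside $L_0$. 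The decisive point is that each of these segment lengths reduces to a single coordinate difference that vanishes at the opposite endpoint of $\gamma$; for instance, $|x''(\gamma(1))| = |x''(\gamma(1)) - x''(\gamma(0))| \leq \ell(\gamma)$ since $x''(\gamma(0)) = 0$, and similarly for $|y''(\gamma(0))|$ and $|y'(\gamma(1)) - y'(\gamma(0))|$. The closed loop thus has length at most $4\ell(\gamma)$ and the same action as $\gamma$, so the loop bound gives $a(\gamma, U) \leq 16 c_2 \ell(\gamma)^2$.

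The main technical point is the mixed-endpoint path case, which is where the adapted coordinates earn their keep: it is essential that $L_0$, $L_1$, and $L_0 \cap L_1$ simultaneously appear as linear coordinate subspaces, since this is what lets the closing segments be taken straight, remain in the Lagrangians, and have length controlled by a single coordinate that automatically vanishes at one endpoint of $\gamma$. Once this closing-up construction is in place, the rest of the proof is the classical isoperimetric inequality in $\R^{2n}$ together with the triangle inequality.
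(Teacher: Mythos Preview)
Your argument is correct and takes a genuinely different route from the paper. The paper works directly in the Euclidean model, but instead of closing the path into a loop it cones $\gamma$ from the origin via $u_\gamma(re^{i\theta}) = r\,\gamma(\tfrac{2}{\pi}\theta)$. Linearity of the model Lagrangians guarantees that the radial edges of this sector land in $L_0$ and $L_1$, so Stokes gives $a(\gamma)=\int u_\gamma^*\om$. The area is then bounded pointwise by $|\gamma(\theta)|\,|\gamma'(\theta)|$, and the crux is a one-line distance estimate: the nearest point of $\gamma$ to $L_0\cap L_1$ is at distance at most $\tfrac{1}{\sqrt 2}\,\ell(\gamma)$ (since outside an $r$-tube around $L_0\cap L_1$ the Lagrangians are $\sqrt 2\,r$ apart), hence $|\gamma(\theta)|\le c\,\ell(\gamma)$. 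Your approach instead reduces everything to the classical loop isoperimetric inequality by appending action-free segments in the linear Lagrangians; this is more modular and makes the role of the adapted chart very transparent, while the paper's cone-from-the-origin avoids any case analysis on the endpoints but requires the slightly less obvious bound on $|\gamma(\theta)|$.

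One small wrinkle: your claim that the closing segments ``remain in the Darboux ball because the Lagrangians appear as linear subspaces'' is neither justified nor needed. The segments may well leave the image $V$ of the chart, but this is harmless: once you have pushed everything into $\R^{2n}$, the primitive $\lam=\sum x_i\,dy_i$ and the form $\om_{\mathrm{std}}$ are globally defined, $\lam$ vanishes on the full linear subspaces (not just on their intersection with $V$), and your filling disk can live anywhere in $\R^{2n}$. So simply drop that sentence and carry out the Stokes and area bound in all of $\R^{2n}$.
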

\begin{proof}
  We carry 
  out the proof assuming that $\gamma$ maps to $\R^{2n}$ with the standard
  symplectic form and Euclidean metric, $p$ is the origin, 
  and $L_0$, $L_1$ as in \eqref{eq:lageuc},
  since the Euclidean metric and $g$ differ by a factor that has an
  upper and lower bound on $U$.
The length of a path $\gamma$ with respect to the Euclidean metric is denoted by $\ell(\gamma)$. 
  We focus on the case when $\gamma$ is a path
  with end-points in $L_0$ and $L_1$, referring the reader to \cite[Remark 4.4.3]{ms:jh}
  for other cases. 
  We consider a sector of a disk bounding $\gamma$, namely a smooth map
  \[u_\gamma : \{re^{i\theta}: r \in [0,1], \theta \in [0,\pi/2]\} \to U, \quad u_\gamma(re^{i\theta})=r\gamma(\tfrac 2 {\pi} \theta).\]
  We have $a(u_\gamma)=\int u_\gamma^*\om$, and pointwise bounds
  \[|\om(\partial_r u, \partial_\theta u)| \leq |\partial_r u|\cdot |\partial_\theta u| \leq  |\gamma(\theta)|\cdot |\gamma'(\theta)|.\]
  The inequality \eqref{eq:isop} follows from the fact that
there is a uniform constant $c>0$ such that 
$|\gamma(\theta)| \leq c|\ell(\gamma)|$, which is seen as follows: Let $q \in \on{image}(u)$ be the closest point to $L_0 \cap L_1$ and $r:=d(q,L_0 \cap L_1)$.
Then for all $\theta$,  $|\gamma(\theta)| \leq r + \ell(\gamma)$.
Finally,
$r \leq  \frac 1 {\sqrt{2}}\ell(\gamma)$, because in the set $\R^{2n} \bs (B_r(L_0 \cap L_1))$ the distance between and $L_0$ and $L_1$ is $\sqrt{2} r$. 

\end{proof}
\subsection{Proof of the strip monotonicity result}
\begin{proof}
  [Proof of Theorem \ref{thm:monot}]
  We fix the constant $R$ so that $B_R(p)$ is contained in a Darboux ball adapted to the Lagrangian submanifolds $L_0$, $L_1$ from Proposition \ref{prop:poz}. 
  Suppose $u:C \to \X$ and $x \in \X$ are as in the hypothesis of the lemma. By a small perturbation of the metric $g$, we may assume that
  \[d_x:C \to \R, \quad d_x(z):=d(x,u(z))\]
  is continuous, and all but a finite number of level sets of $d_x$ are regular and transverse to the strip boundary $\{0,1\} \times \R$.
  Let
  \[C_r:=\{d_x \leq r\}=u^{-1}(B_r(x)), \quad E(r):=\int_{C_r}u^*\om.\]
  In the rest of this proof $c$ denotes a $u$-independent constant, whose value changes across instances.

  \begin{claim}
    There is a $u$-independent constant $c>0$ such that for any regular value $r_0$ of $d_x$,
  \begin{equation}
    \label{eq:ddrarea}
    \ddr E(r_0) \geq c \ell_g(\partial_\ns C_{r_0}),    
  \end{equation}
  where $\ell_g$ is length with respect to the metric $g$.
  \end{claim}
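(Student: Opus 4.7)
The plan is to apply the coarea formula to the auxiliary function $f := d_x \circ u : C \to \R$, estimate the resulting density by means of the pseudoholomorphicity of $u$ and the uniform equivalence of $|\cdot|_J$ and $|\cdot|_g$ from Observation \ref{obs:eqmet}, and then compare to the arc length of the image of the level set.

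Since $r_0$ is a regular value of $f$ whose level set is transverse to $\partial_\st C$, $f^{-1}(r_0)$ is a smooth one-manifold in $C$, and $\partial_\ns C_{r_0}$ agrees with $f^{-1}(r_0) \cap (\R \times (0,1))$ up to a finite set of corners. Using \eqref{eq:enexp} and \eqref{eq:enexp1}, $E(r_0) = \tfrac{1}{2} \int_{C_{r_0}} |du|_J^2\, \om_C$, so the coarea formula yields
\[
  \ddr E(r_0) \;=\; \int_{f^{-1}(r_0)} \frac{|du|_J^2}{2 |\nabla_C f|_C}\, d\ell_C,
\]
where $|\nabla_C f|_C$ is the gradient norm and $d\ell_C$ is arc length taken with respect to any metric in the conformal class on $C$.

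Next, let $|du|_g$ denote the quantity in \eqref{eq:enexp2} with $|\cdot|_J$ replaced by $|\cdot|_g$. Because $d_x$ is $1$-Lipschitz with respect to $g$, one has $|\nabla_C f|_C \leq |du|_g$, and Observation \ref{obs:eqmet} furnishes a uniform constant $c_0 > 0$ with $c_0^{-1} |du|_g \leq |du|_J \leq c_0 |du|_g$. Combining these gives the pointwise bound
\[
  \frac{|du|_J^2}{2 |\nabla_C f|_C} \;\geq\; \frac{|du|_J^2}{2 |du|_g} \;\geq\; \frac{|du|_g}{2 c_0^2}.
\]
Parametrizing $f^{-1}(r_0)$ by arc length in $C$ with unit tangent $\tau$, the length of $u|_{\partial_\ns C_{r_0}}$ measured in $(\X,g)$ equals $\int_{\partial_\ns C_{r_0}} |du(\tau)|_g\, d\ell_C$, which is at most $\int_{f^{-1}(r_0)} |du|_g\, d\ell_C$ since $|du(\tau)|_g \leq |du|_g$ for any unit vector $\tau$. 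Putting the two estimates together yields \eqref{eq:ddrarea} with $c = 1/(2 c_0^2)$.

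The main subtle point is justifying the coarea formula for the nonsmooth composition $f = d_x \circ u$; this is precisely why the small perturbation of $g$ was introduced at the outset, and the transversality of the level set to $\partial_\st C$ ensures that $f^{-1}(r_0) \cap \partial_\st C$ is a finite set contributing no arc length.
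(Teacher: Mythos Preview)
Your argument is correct. Both your proof and the paper's rest on the same idea: differentiate the area by slicing at level sets of $d_x \circ u$, then exploit that $d_x$ is $1$-Lipschitz together with the uniform equivalence of $|\cdot|_J$ and $|\cdot|_g$. The paper carries this out by building explicit coordinates $(r,\tau)$ in a neighborhood of $\partial_\ns C_{r_0}$ with $r = d_x$ and $d\tau = dr \circ j_C$ on $\{r = r_0\}$, writing the energy density as $\tfrac12(|\partial_r u|_J^2 + |\partial_\tau u|_J^2)\,dr\wedge d\tau$, and then applying the AM--GM inequality together with $|\partial_r u|_g \geq 1$. You instead invoke the coarea formula directly and replace the AM--GM step by the norm comparisons $|\nabla_C f|_C \leq |du|_g$ and $|du(\tau)|_g \leq |du|_g$. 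Your route avoids constructing the local coordinate system and is slightly more streamlined; the paper's version is more hands-on and makes explicit where the constant $1$ enters via $|\partial_r u|_g \geq 1$. Either way the estimate is the same up to the value of $c$.
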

  \begin{proof}[Proof of Claim]
    On a small enough neighborhood $U \subset C$ of
    $\partial_\ns C_{r_0}$, there are coordinates $(r,\tau)$ such that
    \[r=d_x, \quad d\tau = f(r,\tau) dr \circ j_C\]
    where $f:U \to \R_+$ is a map satisfying $f \equiv 1$ on
    $\{r=r_0\}$. Indeed, we first fix $\tau$ on $\{r=r_0\}$ by the
    condition $d\tau=dr \circ j_C$; and in a neighborhood of
    $\{r=r_0\}$ we define $\tau$ to be constant on the integral curves
    of the line field $\ker(dr \circ j_C)$.  
    Let $r_1<r$ be such that $C_r \bs C_{r_1} \subset U$.
    By the pseudoholomorphicity of
    $u$, and \eqref{eq:enexp}, \eqref{eq:enexp2}, 
    \[E(r) - E(r_1)=E_\om(u, C_r \bs C_{r_1})=\thh \int_{C_r \bs C_{r_1}}
      (\tfrac 1 f |\tfrac{\partial u}{\partial r}|^2_{_J} +
      f|\tfrac{\partial u}{\partial \tau}|^2_J)dr \wedge
      d\tau. 
    \]
    The above expression may be obtained by taking, for example, $\om_C:=dr \wedge d\tau$
    and $v:=\ppr$ in \eqref{eq:enexp1},  \eqref{eq:enexp2} and observing that $j_C v= f \pptau$.  
%     where $|\cdot|_J$ is the domain-dependent metric
%     $\hh(\om(\cdot, J\cdot) + \om(\cdot, J\cdot))$.  Since $J$ takes
%     values in a $C^0$-neighborhood, and the image of $u$ lies in the
%     compact set $\ol {B_R(p)}$, there is a constant $c$ such that for
%     any $z \in C$ and any $v \in T_{u(z)}C$,
% %
%     \begin{equation}
%       \label{eq:gjeq}
%       |v|_{J(z)} \geq c|v|_g.  
%     \end{equation}
%
    Therefore,
    \begin{multline*}
      \ddr E(r_0)=\thh \int_{\partial_\ns C_{r_0}}(|\partial_r
      u|^2_{_J} + |\partial_\tau u|^2_J)d\tau
      \geq \int_{\partial_\ns C_{r_0}}(|\partial_ru|_{_J} |\partial_\tau u|_J)d\tau  \\
      \geq c\int_{\partial_\ns C_{r_0}}(|\partial_ru|_{_g}
      |\partial_\tau u|_g)d\tau \geq c\int_{\partial_\ns
        C_{r_0}}|\partial_\tau u|_gd\tau = c\ell_g(\partial_\ns
      C_{r_0}).
    \end{multline*}
    The second to last estimate follows from the uniform equivalence \eqref{eq:eqmet} of the $g$ and $J$ metrics, which holds because the image of $u$ lies in the compact set $\ol{B_R(p)}$. 
    The last estimate follows from $|\partial_ru|_g \geq 1$, which is
    a consequence of the definition of the coordinate $r$.
    % \begin{equation}
    %   \label{eq:ddrest}
    %   \ddr E(r_0)=\int_{\partial_\ns C_{r_0}}(|\partial_r u|^2_{_J}
    %   + |\partial_\tau u|^2_J)d\tau
    %   \geq \int_{\partial_\ns C_{r_0}}(|\partial_ru|_{_J}
    %   |\partial_\tau u|_J)d\tau.
    % \end{equation}
  %
  \end{proof}

  We finish the proof of the theorem by using the isoperimetric
  inequality.  Recall that $u$ maps to a Darboux neighborhood where
  $\om$ has a primitive $\lam \in \Om^1(\X)$ that vanishes on $L_0$,
  $L_1$. Therefore, for any $r$,
  \begin{equation}
    \label{eq:isop-cr}
  E(r)=\int_{C_r}u^*\om = \int_{\partial_\ns C_r} u^*\lam \leq c
    \ell_g(\partial_\ns C_r)^2.  
  \end{equation}
  The last inequality is a consequence of
  the isoperimetric inequality \eqref{eq:isop} applied to each connected component of
  the non-strip boundary $\partial_\ns C_r$.
  By \eqref{eq:isop-cr} and \eqref{eq:ddrarea}, $\sqrt{E(r)} \leq c\ddr E(r)$ for almost every $r$. Integrating this inequality,  we obtain $E(r) \geq cr^2$.
\end{proof}

\subsection{Exponential convergence for strips}
We continue with the setting of a pair of cleanly intersecting Lagrangians
and prove a result that is an application of the isoperimetric
inequality (Proposition \ref{prop:iso}).  We show that if a
pseudoholomorphic strip uniformly converges to a point at infinity,
then the rate of convergence is in fact exponential. The techniques are adapted from
\cite{ms:jh} and the proof is given here for the sake of completeness.

\begin{proposition}{\rm(Exponential convergence)}
  \label{prop:expdecay}
  Let $(\X,\om)$, $L_0$, $L_1$, $g$ and the Darboux ball $U \subset \X$ centered at $p \in L_0 \cap L_1$ be as in the hypothesis of Proposition \ref{prop:iso}. Let $u: C \to \X$ be a 
 map on $C:=\R_{\geq 0} \times [0,1]$ that is pseudoholomorphic 
  with respect to a $[0,1]$-dependent  
  tamed 
  almost complex structure $J=(J_t)_{t \in [0,1]}$, which
    satisfies the Lagrangian boundary conditions
    \[u(C \cap (\{i\} \times \R)) \subset L_i \quad \text{for
        $i=0,1$}, \]
    and whose $\om$-area $\int_C u^*\om$ is finite. 
    Suppose further that the maps $u(s,\cdot) : [0,1] \to U$ converge uniformly to a constant map mapping to $p$ as $s \to \infty$. Then, there are constants $c_u, \gamma>0$ such that for all $s \geq 0$, 
    \[d(u(s,t),p) \leq c_ue^{-\gamma s},\]
    where $d$ is the distance on $\X$ with respect to the Riemannian metric $g$. 
\end{proposition}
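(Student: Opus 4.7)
\textbf{Proof proposal for Proposition \ref{prop:expdecay}.}
The plan is to run the standard McDuff--Salamon energy-decay argument adapted to the clean intersection setting, using the isoperimetric inequality (Proposition \ref{prop:iso}) in place of the usual loop version. Since $u(s,\cdot)$ converges uniformly to $p$, there exists $s_0 \geq 0$ such that $u([s_0,\infty) \times [0,1]) \subset U$, where $U$ is the adapted Darboux ball of Proposition \ref{prop:iso}. Write $\om=d\lam$ on $U$ with $\lam$ vanishing on $L_0$, $L_1$, and for $s \geq s_0$ define the tail energy
\[
  \varepsilon(s) := \int_{[s,\infty) \times [0,1]} u^*\om.
\]
By Stokes' theorem, the vanishing of $\lam$ on the Lagrangian boundary pieces, and the hypothesis that $u(\sigma,\cdot) \to p$ as $\sigma \to \infty$, only the $\{s\} \times [0,1]$ component of the boundary contributes, giving $\varepsilon(s) = -\int_0^1 (u|_{\{s\} \times [0,1]})^*\lam = -a(\gamma_s,U)$, where $\gamma_s := u(s,\cdot)$ is a path from $L_0$ to $L_1$.

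Next I apply Proposition \ref{prop:iso} to obtain $\varepsilon(s) \leq c\,\ell_g(\gamma_s)^2$ for a uniform constant $c$. Since $u$ is $J$-holomorphic, $|\partial_s u|_J = |\partial_t u|_J$ pointwise, and by the uniform equivalence of the $g$- and $J$-metrics (Observation \ref{obs:eqmet}) together with Cauchy--Schwarz,
\[
  \ell_g(\gamma_s)^2 \;\leq\; C \int_0^1 |\partial_t u|_J^2\,dt \;=\; C \int_0^1 |\partial_s u|_J^2\,dt \;=\; -C\,\varepsilon'(s).
\]
Combining the two inequalities yields an ODE $\varepsilon(s) \leq -C'\,\varepsilon'(s)$ for a.e.\ $s \geq s_0$, which integrates to $\varepsilon(s) \leq \varepsilon(s_0)\,e^{-(s-s_0)/C'}$, so the tail energy decays exponentially at some rate $2\gamma > 0$.

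To convert this $L^2$-type decay into the pointwise estimate $d(u(s,t),p) \leq c_u e^{-\gamma s}$, I invoke the standard mean value inequality for $J$-holomorphic strips with Lagrangian boundary (proved by reflection at the Lagrangian, see \cite[Sec.~4.3]{ms:jh}): there is a constant $K$ such that for $s \geq s_0+1$,
\[
  |du(s,t)|_g^2 \;\leq\; K \int_{[s-1,s+1] \times [0,1]} |du|_g^2 \;\leq\; K'\,\varepsilon(s-1) \;\leq\; K''\,e^{-2\gamma s}.
\]
Integrating $|\partial_s u(\sigma,t)| \leq K'''e^{-\gamma \sigma}$ from $s$ to $\infty$ and using $u(\sigma,t) \to p$ gives $d(u(s,t),p) \leq c_u e^{-\gamma s}$, with the constant adjusted on the compact region $[0,s_0]$ using the uniform continuity of $u$ there. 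The main subtlety is the cleanness and Stokes step: one must be careful that the boundary terms at infinity vanish, which follows from the uniform convergence $u(s,\cdot) \to p$ together with the boundedness of $\lam$ on $U$; the mean value inequality is standard but must be applied in its Lagrangian boundary form rather than the interior form.
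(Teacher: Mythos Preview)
Your proposal is correct and follows essentially the same route as the paper: derive the differential inequality $\varepsilon(s)\leq -C'\varepsilon'(s)$ from the isoperimetric inequality (Proposition \ref{prop:iso}) combined with H\"older/Cauchy--Schwarz and the metric equivalence \eqref{eq:eqmet}, integrate to get exponential energy decay, and then upgrade to a pointwise bound via the Lagrangian mean value inequality \cite[Lemma 4.3.1(ii)]{ms:jh}. The only cosmetic difference is that you make the Stokes step $\varepsilon(s)=-a(\gamma_s,U)$ explicit, whereas the paper invokes the isoperimetric inequality directly on the tail energy; your remark that the boundary term at infinity vanishes by the uniform convergence hypothesis is a detail the paper leaves implicit.
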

\begin{proof}
  First, we show that the $\om$-area of the strip decays exponentially
  towards the end.  In this proof, $c$, $c_0$, $c_1$, $\gamma$ denote
  $u$-independent constants, and the constant $c$ changes across
  instances.  Denote the $\om$-area on the truncated strip by
  \[E(s_0):=\thh \int_{C \cap \{s \geq s_0\}} u^*\om % = \thh \int_{s_0}^\infty \int_0^1 |\partial_s u|^2_J + |\partial_t u|^2_J  dt ds
    = \int_{s_0}^\infty \int_0^1 |\partial_t u|^2_J  dt ds,
  \]
  where the second equality follows from \eqref{eq:enexp}, and 
  since $\partial_su=-J \partial_t u$,  $|\partial_t u|_J=|J\partial_t u|_J$. 
We have
\[-\dds E(s_0)  =\int_0^1 |\partial_t u(s_0,t)|^2_{J_t} dt \geq \left (\int_0^1 |\partial_t
  u(s_0,t)|_{J_t} dt\right )^2  \geq c_0 \ell_g(s_0)^2, \]
where the second to last estimate follows from H\"older's inequality, and the last estimate uses the uniform equivalence between the metrics $|\cdot|_g$ and $|\cdot|_{J_t}$ (see \eqref{eq:eqmet}). 
The
isoperimetric inequality \eqref{eq:isop} implies $E(s_0) \leq c_1 \ell_g(s_0)^2$, and therefore, 
\[-\dds E(s_0) \geq 2\gamma E(s_0), \quad \text{ where } \gamma=\thh c_0 c_1^{-1}.\]
Integrating, we conclude that
\begin{equation}
  \label{eq:decay}
  E(s) \leq E(0) e^{-2\gamma s}.  
\end{equation}

We complete the proof using the mean value inequality for
pseudoholomorphic curves. Choose $0<r \ll 1$.  By the mean value
inequality \cite[Lemma 4.3.1 (ii)]{ms:jh}, for any $z=(s,t) \in C$,
$s > 2r$,
  \[|du(z)|_g^2 \leq \tfrac 8 {\pi r^2}\int_{B_{2r}(z) \cap C}|du|_g^2.\]
  By the uniform equivalence of the $J$ and $g$ metrics, there is a constant $c>0$ such that for all $z=(s,t)$,
  \[|du(z)|_g \leq c \left(\int_{B_{2r}(z) \cap C}|du|_J^2 \right)^{1/2}=c E(u,B_{2r}(z))^{1/2} \leq c E(0)^{1/2} e^{-\gamma s}.\]
  Integrating we obtain
  \[d_\cyl(u(s,t),p) \leq \int_s^\infty |\partial_s u| ds \leq c E(0)^{1/2} e^{-\gamma s}.\]
The proposition follows by taking $c_u:=cE(0)^{1/2}$. 
\end{proof}

\bibliography{rem}{}

\begin{thebibliography}{10}

\bibitem{bo:com}
F.~Bourgeois, Y.~Eliashberg, H.~Hofer, K.~Wysocki, and E.~Zehnder.
\newblock Compactness results in symplectic field theory.
\newblock {\em Geom. Topol.}, 7:799--888, 2003.

\bibitem{bourg:thesis}
Frederic Bourgeois.
\newblock {\em A {M}orse-{B}ott approach to contact homology}.
\newblock ProQuest LLC, Ann Arbor, MI, 2002.
\newblock Thesis (Ph.D.)--Stanford University.

\bibitem{hwz:deg}
H.~Hofer, K.~Wysocki, and E.~Zehnder.
\newblock Properties of pseudoholomorphic curves in symplectisation. {IV}.
  {A}symptotics with degeneracies.
\newblock In {\em Contact and symplectic geometry ({C}ambridge, 1994)},
  volume~8 of {\em Publ. Newton Inst.}, pages 78--117. Cambridge Univ. Press,
  Cambridge, 1996.

\bibitem{hwz:nondeg}
H.~Hofer, K.~Wysocki, and E.~Zehnder.
\newblock Properties of pseudoholomorphic curves in symplectisations. {I}.
  {A}symptotics.
\newblock {\em Ann. Inst. H. Poincar\'e{} C Anal. Non Lin\'eaire},
  13(3):337--379, 1996.

\bibitem{Hofer:weinstein}
Helmut Hofer.
\newblock Pseudoholomorphic curves in symplectizations with applications to the
  weinstein conjecture in dimension three.
\newblock {\em Invent Math}, 114:515--563, 1993.

\bibitem{hutchings:mo}
Michael~Hutchings (https://mathoverflow.net/users/6670/michael hutchings).
\newblock Comparison between hamiltonian floer cohomology and lagrangian floer
  cohomology of the diagonal.
\newblock MathOverflow.
\newblock URL:https://mathoverflow.net/q/43357 (version: 2010-10-24).

\bibitem{ms:jh}
Dusa McDuff and Dietmar Salamon.
\newblock {\em {$J$}-holomorphic curves and symplectic topology}, volume~52 of
  {\em American Mathematical Society Colloquium Publications}.
\newblock American Mathematical Society, Providence, RI, second edition, 2012.

\bibitem{Poz:thesis}
M.~Pozniak.
\newblock {\em Floer homology, Novikov rings and clean intersections}.
\newblock PhD thesis, University of Warwick, 1994.

\bibitem{siefring:deg}
Richard Siefring.
\newblock Finite-energy pseudoholomorphic planes with multiple asymptotic
  limits.
\newblock {\em Math. Ann.}, 368(1-2):367--390, 2017.

\bibitem{Sikorav}
Jean-Claude Sikorav.
\newblock {\em Some properties of holomorphic curves in almost complex
  manifolds}, pages 165--189.
\newblock Birkh{\"a}user Basel, Basel, 1994.

\bibitem{wendl:sft}
Chris Wendl.
\newblock Lectures on symplectic field theory, 2016.

\end{thebibliography}
\bibliographystyle{plain}
\end{document}